\newcommand{\reals}{\mathbb{R}}
\newcommand{\nats}{\mathbb{N}}
\newcommand{\pr}{P}
\newcommand{\lpr}{{\underline{\pr}}}
\newcommand{\upr}{{\overline{\pr}}}
\newcommand{\nex}{E}
\newcommand{\lnex}{{\underline{\nex}}}
\newcommand{\lgain}{{\underline{G}}}
\newcommand{\field}{\mathcal{A}}
\newcommand{\seteve}{\mathcal{E}}
\newcommand{\genfield}{\field(\prt)}
\newcommand{\strset}{\field_{C}}
\newcommand{\setran}{\mathcal{S}}
\newcommand{\nega}[1]{\neg #1}
\newcommand{\dsn}{\!\!}
\newcommand{\prt}{I\dsn P}
\newcommand{\eqdef}{\stackrel{\mathrm{def}}{=}}
\newcommand{\lgn}{\leq_{\textsc{GN}}}
\lbrace\begin{array}{@{}l@{}}}%
\title{The Goodman-Nguyen Relation within Imprecise Probability Theory}
\newtheorem{lemma}{Lemma}
\newtheorem{proposition}{Proposition}
\theoremstyle{remark}
\newtheorem{remark}{Remark}
\theoremstyle{example}
\newtheorem{example}{Example}
\theoremstyle{definition}
\newtheorem{definition}{Definition}
\begin{document}




\title{The Goodman-Nguyen Relation within Imprecise Probability Theory}


\author{RENATO PELESSONI}

\address{DEAMS ``B. de Finetti''\\
University of Trieste\\
Piazzale Europa~1\\
I-34127 Trieste\\
Italy}
\email{renato.pelessoni@econ.units.it}

\author{PAOLO VICIG}

\address{DEAMS ``B. de Finetti''\\
University of Trieste\\
Piazzale Europa~1\\
I-34127 Trieste\\
Italy}
\email{paolo.vicig@econ.units.it}

\begin{abstract}
The Goodman-Nguyen relation is a partial order generalising the implication 
(inclusion) relation to conditional events.
As such,
with precise probabilities it both induces an agreeing probability ordering and is a key tool
in a certain common extension problem.
Most previous work involving this relation is concerned with either conditional event algebras or precise probabilities.
We investigate here its role within imprecise probability theory,
first in the framework of conditional events and then proposing a generalisation of the Goodman-Nguyen relation to
conditional gambles.
It turns out that this relation induces an agreeing ordering on coherent or C-convex
conditional imprecise previsions.
In a standard inferential problem with conditional events,
it lets us determine the natural extension,
as well as an upper extension.
With conditional gambles,
it is useful in deriving a number of inferential inequalities. 

\smallskip
\noindent \textbf{Keywords.} Goodman-Nguyen relation, imprecise probabilities, imprecise previsions, natural extension
\end{abstract}

\maketitle


\section*{Acknowledgement}
*NOTICE: This is the authors' version of a work that was accepted for publication in the International Journal of Approximate Reasoning. Changes resulting from the publishing process, such as peer review, editing, corrections, structural formatting, and other quality control mechanisms may not be reflected in this document. Changes may have been made to this work since it was submitted for publication. A definitive version was subsequently published in the International Journal of Approximate Reasoning, vol. 55, issue 8, November 2014, pages 1694-–1707, doi:10.1016/j.ijar.2014.06.002

$\copyright$ Copyright Elsevier

http://www.sciencedirect.com/science/article/pii/S0888613X14001017

\section{Introduction}
\label{sec:introduction}
It is well known in probability theory
that relations among events determine elementary probability rules as well as inferential bounds.
Take for instance two non-trivial (i.e. non-impossible, non-certain) events $E$, $F$.
If we know nothing else about $E$, $F$, any probability assignment $P$ on $\{E,F\}$
such that $P(E)\in [0,1]$, $P(F)\in[0,1]$ is consistent (coherent).
Knowing further that $E$ and $F$ are disjoint,
i.e. that $E\wedge F=\varnothing$,
introduces the constraint $P(E)+P(F)\leq 1$.
We can also \emph{infer} that $P(E\vee F)=P(E)+P(F)$.
This inference would not be necessarily justified replacing $P$ with a more general uncertainty measure:
with a coherent lower probability $\lpr$,
for instance,
we could only state that $\lpr(E\vee F)\geq\lpr(E)+\lpr(F)$.

The  \emph{implication} (inclusion) relation $E\Rightarrow F$ ($E\subset F$) is the most prominent example of a relation
whose effects should (in principle) be independent of the uncertainty measure $\mu$ we use.
This is because 
the \emph{monotonicity} requirement
\begin{equation}
\label{eq:EimpliesF}
(E\Rightarrow F) \rightarrow \mu(E)\leq \mu(F)
\end{equation}
is
a very minimal one.
In fact, $E\Rightarrow F$ means that $F$ is
certainly true whenever $E$ is true, but might possibly be true even
in cases when $E$ is false: then obviously $F$ must be at least as
likely as $E$. In fact, \eqref{eq:EimpliesF} holds also when $\mu$
is a coherent lower/upper probability, or a capacity. In the latter
case, it is generally taken as one of the defining properties of
capacities.

The implication relation `$\Rightarrow$' also plays a role in the
following extension problem, a special case of de Finetti's Fundamental
Theorem \cite{deF74}: given a coherent probability $P$ on the set
$\genfield$ of all events (logically) dependent on a given partition
$\prt$, which are its coherent extensions to an additional event
$E\notin \field(\prt)$? The well known answer is that $P(E)$ must be
chosen in a closed interval, $P(E_{*})\leq P(E)\leq P(E^{*})$.
Here the events $E_{*}$, $E^{*}$ are defined using the implication relation (see Definition \ref{def:inner_outer})
and belong to $\genfield$.
Hence, $P(E)$ is bounded by the previous assessment on $\genfield$.

A generalisation of the implication relation to \emph{conditional}
events, the Goodman-Nguyen (in short: GN) relation $\lgn$ was
apparently first introduced in \cite{Goo88}, and some of its
implications for precise conditional probabilities were studied in
\cite{Col93, Col96, Mil97}.

The main purpose of this paper is to further explore the relevance
of the GN relation in more general cases,
extending the previous work in \cite{Pel13}.
Section \ref{sec:preliminaries} recalls some preliminary material,
including a survey of known facts about the GN relation in Section \ref{subsec:GN_relation}.
In Section \ref{sec:GN_relation_ip},
the role of the GN relation with either coherent according to Williams' definition (W-coherent)
or C-convex imprecise probabilities is studied.
Proposition \ref{pro:preserve_inequality} ensures that the generalisation of equation \eqref{eq:EimpliesF},
i.e.
\begin{equation}
\label{eq:defleGN}
A|B\lgn C|D\rightarrow\mu(A|B)\leq\mu(C|D)
\end{equation}
holds for such probabilities.
Section \ref{subsec:natural_extension} considers extensions of W-coherent or C-convex `full' probabilities;
such `full' probabilities are defined on all events
$A|B$ such that both $A$ and $B$ ($B\neq\varnothing$) are logically dependent on some given partition.
Proposition \ref{pro:ext_prob} characterises consistent extensions on an additional event $C|D$;
Propositions \ref{pro:ext_prob_many} and \ref{pro:GN-extensions} characterise
the natural and convex natural extensions on an \emph{arbitrary} set of additional events.
Proposition \ref{pro:upper-extension} regards the upper extensions,
Proposition \ref{pro:dF-coherent_extensions} the special case of precise probabilities.
The investigation of the GN relation with conditional gambles starts in Section \ref{subsec:GN relation conditional events}
with a discussion of its betting interpretation in the standard case of events.
Some aspects not emphasised in the previous literature are highlighted.
This justifies,
together with Proposition \ref{pro:equivalence_GN_rn_events},
our extension (Definition \ref{def:GN_random_numbers}) of the GN relation in Section \ref{subsec:GN_conditional_gambles}.
The extension induces an agreeing ordering on several types of conditional imprecise previsions,
as shown in Section \ref{subsec:Ordering from GN relation}.
A number of special inequalities are also derived in Section \ref{sec:inequalities_GN}.
Section \ref{sec:conclusions} concludes the paper.
\section{Preliminaries}
\label{sec:preliminaries}
In this section we first fix some notation and definitions to be used throughout the paper (Section \ref{subsec:notation}).
We then briefly recall some basic facts from the theory of imprecise probabilities
that will be needed in the sequel (Section \ref{subsec:imprecise_previsions}).
For in-depth studies of these issues,
cf. \cite{deF74, Pel05, Pel09, Wal91, Wei01, Wil07}.
The Goodman-Nguyen relation and known related issues are surveyed in Section \ref{subsec:GN_relation}.

\subsection{Notation and Definitions}
\label{subsec:notation}
In the sequel, following \cite{deF74,Goo88} and others,
we employ the logical rather than the set
theoretical notation for operations with events.
Let $\prt$ be a \emph{partition},
i.e. a set of pairwise disjoint
events whose logical sum (union) is the sure event $\Omega$.
An event $E$ is \emph{logically dependent} on $\prt$ iff $E$ is a
logical sum of events of $\prt$,
$E=\bigvee\limits_{\omega\in\prt:\ \omega\Rightarrow E}\omega$.
The set $\genfield$ of all events logically
dependent on $\prt$ is a field (also called the power set of $\prt$).

When dealing with unconditional events,
probabilities and other uncertainty measures are often assessed on $\genfield$,
where $\prt$ is given.
In general, we may think of eliciting an uncertainty measure on an \emph{arbitrary} set of events
$\mathcal{D}=\{E_i:i\in I\}$.
The set $\mathcal{D}$ is generally not a partition nor the power set of some partition,
but \emph{generates} a partition \emph{$\prt_g$}.
The elements of this partition are given by all the logical products
$\bigwedge_{i\in I}E_i^\prime$, where for each $i\in I$ the symbol $E_i^\prime$
can be replaced by either event $E_i$ or its negation $\nega{E_i}$. 
Note that some products $\bigwedge_{i\in I}E_i^\prime$ may be impossible.
The events in $\mathcal{D}$ belong to $\field(\prt_g)$.

A random number $X$ is also described by a (not uniquely identified) partition.
Typically, we consider for this the canonical partition $\prt_X$.
Its events are $(X=x)$,
i.e. $\prt_X$ is the partition of all possible distinct values $x$ of $X$.
Yet, we might be bound to refer to other,
more refined,
partitions.
For instance,
when describing two random variables $X$, $Y$ at the same time,
we may refer to the partition $\prt_{X,Y}$ whose events are $(X=x\ \wedge\ Y=y)$,
for all jointly possible values of $X$, $Y$.
This is an example of \emph{product partition} 
(other instances will appear in some proofs in Section \ref{sec:GN_relation_ip}).
In general,
given two partitions $\prt$, $\prt^\prime$ with generic elements, respectively,
$\omega$, $\omega^\prime$,
their product partition is $\prt\wedge\prt^\prime=\{\omega\wedge\omega^\prime:\omega\in\prt, \omega^\prime\in\prt^\prime\}$.
Hence $\prt_{X,Y}=\prt_X\wedge\prt_Y$.
The product partition $\prt\wedge\prt^\prime$ is more refined than
both $\prt$ and $\prt^\prime$
(hence, if $E\in\genfield$,
then also $E\in\field(\prt\wedge\prt^\prime$)).
Note that some $\omega\wedge\omega^\prime$ are, in general,
impossible:
the special case $\omega\wedge\omega^\prime\neq\varnothing$,
$\forall\omega\in\prt$, $\forall\omega^\prime\in\prt^\prime$
characterizes the \emph{logical independence}
of $\prt$ and $\prt^\prime$.
We shall not necessarily assume logical independence.

The random numbers we shall deal with in the sequel are all bounded, i.e. they are \emph{gambles}.

In our framework,
conditional events and gambles will be needed.
In terms of a truth
table, a conditional event $A|B$ can be thought of as true, when $A$
and $B$ are true, false when $A$ is false and $B$ true, undefined
when $B$ is false. It ensues that $A|B$ and $A\wedge B|B$ have the
same logical values, i.e. $A|B={A\wedge B}|B$.

Given a partition $\prt$ (describing $X$),
a conditional gamble $X|B$,
$B\in\genfield-\{\varnothing\}$, takes up the values $X(\omega)$,
for $\omega\in\prt$, $\omega\Rightarrow B$, is undefined for
$\omega\Rightarrow\nega{B}$. When $B=\Omega$, $X|\Omega=X$ is an unconditional gamble.
The \emph{indicator} $I_A$ of an event
$A$ is the simplest non-trivial gamble.
We shall often denote
$A$ and its indicator $I_A$ with the same letter $A$. Note that
$A\Rightarrow B$ is equivalent to $I_A\leq I_B$:
the implication relation between events corresponds to the weak inequality between their indicators.

\subsection{Imprecise Previsions}
\label{subsec:imprecise_previsions}
A \emph{lower prevision} $\lpr$ on a set $\setran$ of conditional gambles is a map $\lpr:
\setran\longmapsto\reals$. If $\setran$ has the property $X|B\in \setran\rightarrow
-X|B\in \setran$, the conjugate \emph{upper prevision} of $\lpr$ is defined as
$\upr(X|B)=-\lpr(-X|B)$.
Conjugacy allows employing lower or alternatively upper previsions only.

Several consistency concepts for lower/upper previsions have been introduced in the literature.
An important one is the following:
\begin{definition}
\label{def:W-coherence} A  lower prevision $\lpr : \setran\longmapsto
\reals$ is \emph{W-coherent} iff, for all $n\in\nats$, $\forall
X_0|B_0,\ldots,X_n|B_n\in \setran$, $\forall\ s_0,s_1,\ldots,s_n$ real and
\emph{non-negative}, defining $B=\bigvee_{i=0}^{n} B_{i}$ and
$\lgain=\sum_{i=1}^{n}s_{i}B_{i}(X_{i}-\lpr(X_{i}|B_{i}))-s_{0}B_{0}(X_{0}-\lpr(X_{0}|B_{0}))$,
the following condition holds: $\sup(\lgain|B)\geq 0$.
\end{definition}
This is essentially Williams' definition of coherence \cite{Wil07},
as restated in \cite{Pel09}.
It is equivalent to Walley's definition 7.1.4(b) in \cite{Wal91}
if $\setran$ is made up of a finite number of conditional gambles,
each with finitely many values.
If $X|B=X|\Omega=X$, $\forall X|B\in\setran$,
it reduces to Walley's (unconditional) coherence (\cite{Wal91}, Sec. 2.5.4 (a)).

Like the other consistency concepts we recall in this section,
Definition \ref{def:W-coherence} is axiomatic,
but is customarily given an interpretation in terms of betting schemes.
To outline it,
recall that the conditional gamble $\lgain|B$ is the \emph{gain} from betting
in favour of $X_1|B_1,\ldots, X_n|B_n$ and against $X_0|B_0$
at \emph{stakes} $s_1,\ldots,s_n$ and $s_0$ respectively.
The bet regarding $X_i|B_i$ ($i=0,\ldots,n$) is called off iff $B_i$ is false.
Conditioning $\lgain$ on $B$ requires that at least one bet is effective.

A weaker concept than W-coherence is that of \emph{convex}
conditional lower prevision.
It may be obtained from Definition
\ref{def:W-coherence} by introducing the extra \emph{convexity
constraint} $\sum_{i=1}^{n}s_{i}=s_{0}\ (>0)$. 
\begin{definition}
\label{def:C-convexity}
A  lower prevision $\lpr : \setran\longmapsto
\reals$ is \emph{convex} iff, for all $n\in\nats$, $\forall
X_0|B_0,\ldots,X_n|B_n\in \setran$, $\forall\ s_0,s_1,\ldots,s_n$ real and \emph{non-negative}, such that $\sum_{i=1}^{n}s_i=s_0>0$, defining $B=\bigvee_{i=0}^{n} B_{i}$ and
$\lgain=\sum_{i=1}^{n}s_{i}B_{i}(X_{i}-\lpr(X_{i}|B_{i}))-s_{0}B_{0}(X_{0}-\lpr(X_{0}|B_{0}))$,
the following condition holds: $\sup(\lgain|B)\geq 0$.

$\lpr$ is \emph{centered convex (C--convex)} when it is convex and $X|B\in \setran$ implies that $0|B\in \setran$ and $\lpr(0|B) = 0$.
\end{definition}
These previsions were studied in \cite{Pel05} and
are related to certain kinds of risk measures.
Convex previsions that are not necessarily centered have been sometimes considered in the literature,
for instance in \cite{Fol02} with the corresponding concept of (unconditional) convex risk measure.
C-convex previsions ensure however definitely better consistency properties
(cf. also the discussion following Proposition \ref{pro:GN_measures}).
We shall mainly refer to them in what follows.

Precise conditional previsions may be defined similarly \cite{Hol85},
extending de Finetti's notion of coherence for unconditional previsions \cite{deF74}:
\begin{definition}
\label{def:condPrev} $P:\setran\longmapsto\reals$ is a dF-coherent
conditional prevision iff, for all $n \in\nats$, $\forall\
X_1|B_1,\ldots,X_n|B_n \in \setran$, $\forall\ s_i \in \reals$
$(i=1,\ldots,n)$, defining
$G=\sum_{i=1}^{n}s_{i}B_{i}(X_i-P(X_i|B_i))$,
$B=\bigvee_{i=1}^{n}B_i$, it holds that $\sup(G|B)\geq 0$.
\end{definition}
\begin{remark}
\label{rem:c-convexity}
C-convexity is more general than W-coherence and dF-coherence.
Hence, the properties of C-convex previsions hold for W-coherent and dF-coherent previsions too.
They apply to Walley's coherence too,
whenever it is equivalent to W-coherence.
This is the case, for instance, of properties involving finitely many events,
like Proposition \ref{pro:preserve_inequality}.
\hfill $\blacklozenge$
\end{remark}
It is well known that dF-coherent (W-coherent, C-convex) previsions on $\setran$ allow for extensions on \emph{any}
set of conditional gambles $\setran^\prime\supset\setran$ which are dF-coherent (W-coherent, C-convex, respectively).
The special extension problem in the next lemma will be needed in the proof of Proposition \ref{pro:ext_prob}.
\begin{lemma}
\label{lem:extension}
Let $\mu(\cdot|\cdot)$ be a dF-coherent prevision
(alternatively, a W-coherent or C-convex lower or upper prevision) on $\setran$.
Suppose that $\mu^\prime$,$\mu^{\prime\prime}$ are two dF-coherent
(alternatively, W-coherent or C-convex)
extensions of $\mu$ on $\setran\cup\{X|D\}$,
and that $\mu^\prime(X|D)=m$, $\mu^{\prime\prime}(X|D)=M>m$.
Then, any extension $\mu_{ext}$ of $\mu$ on $\setran\cup\{X|D\}$ such that
$\mu_{ext}(X|D)\in [m,M]$
is dF-coherent (respectively, W-coherent, C-convex). 
\end{lemma}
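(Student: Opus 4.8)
The statement is a convexity-type result: dF-coherent (and W-coherent, C-convex) extensions of $\mu$ to $\setran\cup\{X|D\}$ form a convex set when parametrized by the value assigned to $X|D$. Since we already know $\mu',\mu''$ are such extensions with $\mu'(X|D)=m$ and $\mu''(X|D)=M$, and $\mu_{ext}$ agrees with $\mu'$ and $\mu''$ on $\setran$ (all three extend $\mu$), the idea is to write $\mu_{ext}(X|D)=\lambda m+(1-\lambda)M$ for some $\lambda\in[0,1]$ and show that the defining supremum condition for $\mu_{ext}$ follows from those for $\mu'$ and $\mu''$.

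First I would fix $n$, conditional gambles $X_0|B_0,\dots,X_n|B_n\in\setran\cup\{X|D\}$, and non-negative stakes (with the convexity constraint $\sum_{i=1}^n s_i=s_0>0$ in the C-convex case, arbitrary reals in the dF case), and form the gain $\lgain$ as in the relevant definition, with $B=\bigvee_{i=0}^n B_i$. I would split into the trivial case where $X|D$ does not actually occur among the $X_i|B_i$ — then $\lgain$ is a gain built from $\setran$ alone and evaluated at $\mu=\mu'=\mu''$ on those gambles, so $\sup(\lgain|B)\ge 0$ is immediate from coherence of $\mu$. The substantive case is when $X|D$ appears, say as one or more of the $X_i|B_i$. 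Here I would use that $\mu_{ext}(X|D)=\lambda\mu'(X|D)+(1-\lambda)\mu''(X|D)$, so each term $s_iB_i(X_i-\mu_{ext}(X_i|B_i))$ involving $X|D$ splits linearly: the gain $\lgain$ built with $\mu_{ext}$ equals $\lambda\lgain'+(1-\lambda)\lgain''$, where $\lgain'$ (resp.\ $\lgain''$) is the gain with the \emph{same} $n$, gambles and stakes but computed with $\mu'$ (resp.\ $\mu''$). Crucially $\lgain'$ and $\lgain''$ are legitimate gains for testing coherence of $\mu'$ and $\mu''$ respectively — same stakes, same $B$, and in the C-convex case the convexity constraint on the stakes is untouched. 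Hence $\sup(\lgain'|B)\ge 0$ and $\sup(\lgain''|B)\ge 0$.

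The last step is to combine these: on the event $B$, $\lgain=\lambda\lgain'+(1-\lambda)\lgain''$, and since $\lambda,1-\lambda\ge 0$ we get $\sup(\lgain|B)\ge\lambda\sup(\lgain'|B)+(1-\lambda)\sup(\lgain''|B)\ge 0$ — wait, that inequality goes the wrong way for a sup in general, so I would instead argue pointwise: for the supremum of $\lgain'|B$ to be $\ge 0$ means $\lgain'$ takes a value $\ge -\eps$ on $B$ for every $\eps>0$; I'd pick $\omega'$ with $\lgain'(\omega')\ge-\eps$ and $\omega''$ with $\lgain''(\omega'')\ge-\eps$, but these may differ, so the clean route is rather to observe that $\sup(\lambda f+(1-\lambda)g)\ge \lambda\sup f + (1-\lambda)\inf g$ is not enough either. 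The right move is: $\inf_{B}(-\lgain')\le 0$ and $\inf_B(-\lgain'')\le 0$; then $\inf_B(-\lgain)=\inf_B(\lambda(-\lgain')+(1-\lambda)(-\lgain''))$, and since at a point where $-\lgain'$ is near its infimum we cannot control $-\lgain''$, I would instead directly cite that $\sup(\lgain|B)\ge 0$ because $\lgain=\lambda\lgain'+(1-\lambda)\lgain''$ and on the support of $B$ we have $\sup(\lgain|B)\ge \lambda\,(\text{value of }\lgain'\text{ near its sup on }B)+\dots$ — the cleanest correct statement is: if $\sup(\lgain|B)<0$, then $\lgain<0$ on all of $B$, i.e.\ $\lambda\lgain'+(1-\lambda)\lgain''<0$ on $B$, whence at least one of $\lgain'<0$ or $\lgain''<0$ holds at each point, but more usefully $\lambda\sup(\lgain'|B)+(1-\lambda)\sup(\lgain''|B)\le \sup(\lambda\lgain'+(1-\lambda)\lgain''\mid B)=\sup(\lgain|B)<0$ is false since the left side is $\ge 0$. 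Here I am using $\sup(f+g)\le\sup f+\sup g$ in the form $\sup(\lgain|B)\ge\lambda\sup(\lgain'|B)+(1-\lambda)\sup(\lgain''|B)$?? That also fails. The genuinely valid inequality is $\sup(\lambda\lgain'+(1-\lambda)\lgain''\,|\,B)\le\lambda\sup(\lgain'|B)+(1-\lambda)\sup(\lgain''|B)$, which gives an upper bound, not the lower bound I need.

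So the honest main obstacle — and the step I expect to need care — is exactly this sup-combination. The resolution: do not try to combine suprema; instead use the contrapositive on $\lgain$ directly. If $\sup(\lgain|B)<0$ then $\lgain(\omega)<0$ for all $\omega$ with $\omega\Rightarrow B$; but $\lgain=\lambda\lgain'+(1-\lambda)\lgain''$ pointwise, so this forces, for each such $\omega$, $\lgain'(\omega)<0$ or $\lgain''(\omega)<0$ — still not immediately a contradiction. The correct fix is to work with a \emph{single} common bet: since $\mu',\mu''$ agree on $\setran$ and differ only on $X|D$, $\lgain'-\lgain''=c\cdot D\cdot(M-m)$ for a constant $c$ (the signed total stake on $X|D$, positive or negative depending on side), so $\lgain=\lgain''+\lambda c(M-m)D$ and also $\lgain=\lgain'-(1-\lambda)c(M-m)D$. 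If $c\le 0$ (net stake against $X|D$, or with sign conventions), then $\lgain\ge\lgain''$ on $B$ when $\lambda c\le 0$... this sign bookkeeping is the real work. The clean statement to aim for: choosing $\lambda$ so $\mu_{ext}(X|D)\in[m,M]$, on the event $D$ the gain $\lgain$ lies between $\lgain'$ and $\lgain''$ pointwise (it is a convex combination of two real numbers $\lgain'(\omega),\lgain''(\omega)$ at each $\omega$), hence $\lgain\ge\min(\lgain',\lgain'')$ on $B$; therefore $\sup(\lgain|B)\ge\sup(\min(\lgain',\lgain'')|B)$, and one shows $\sup(\min(\lgain',\lgain'')|B)\ge 0$ by noting that $\min(\lgain',\lgain'')$ equals $\lgain'$ or $\lgain''$ on each atom and using that \emph{both} have non-negative sup on $B$ — which still needs the atoms where each sup is attained to be compatible. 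The actual way out, which I would adopt, is Walley-style: replace $\lgain'$ by $\lgain'+\eps$ and note each of $\mu',\mu''$ coherent means $\lgain'\ge-\eps$ somewhere on $B$; then separately handle the two cases $c\ge 0$ and $c<0$, in one of which $\lgain\ge\lambda\lgain'+(1-\lambda)\lgain''\ge\lgain''$ on $B$ (or $\ge\lgain'$), reducing $\sup(\lgain|B)\ge0$ to coherence of one of the two extensions. I would carry out precisely this case split on the sign of the aggregate stake on $X|D$ as the crux of the proof, with everything else routine.
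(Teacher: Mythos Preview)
Your final landing --- case-split on the sign of the net stake $c$ on $X|D$ and observe that $\lgain$ then pointwise dominates one of $\lgain',\lgain''$ on $B$ --- is correct and is exactly the paper's proof. The paper reaches it in two lines without the $\lambda$-parametrisation or the false starts: writing $\lgain=sD(X-\mu_{ext}(X|D))+R$, if $s\geq 0$ then $\lgain\geq sD(X-M)+R=\lgain''$ since $\mu_{ext}(X|D)\leq M$, and symmetrically $\lgain\geq\lgain'$ if $s<0$.
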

\begin{proof}
The consistency of $\mu_{ext}$ may be proved by checking Definitions \ref{def:W-coherence}, \ref{def:C-convexity} and \ref{def:condPrev},
or using conjugacy in the upper prevision case.
The procedure is essentially the same,
and we exemplify it when $\mu$ is a C-convex lower prevision on $\setran$.
Then, by Definition \ref{def:C-convexity},
$\mu_{ext}$ is C-convex on $\setran\cup\{X|D\}$ iff
$\forall X_0|B_0,\ldots,X_n|B_n\in\setran\cup\{X|D\}$,
$\forall s_0,\ldots,s_n$ real and non--negative,
such that $\sum_{i=1}^{n}s_{i}=s_{0}>0$,
\begin{equation}
\label{eq:gain_ext}
\sup\lgain|B=\sup\{
\sum_{i=1}^{n}s_i B_i (X_i-\mu_{ext}(X_i|B_i))
-s_0 B_0(X_0-\mu_{ext}(X_0|B_0))|B\}
\geq 0.
\end{equation}
Clearly,
there is nothing to prove if $X_0|B_0,\ldots,X_n|B_n\in S$.
If not,
let us call $s$ the stake regarding $X|D$.\footnote{Since $X_0|B_0,\ldots,X_n|B_n$ need not be distinct,
if $X|D$ is present more than once $s$ is the stake of the sum of the terms where it appears.}
Then, we write
$\lgain|B=sD(X-\mu_{ext}(X|D))+R|B$,
where $R$ consists of the remaining terms in $\lgain$.

If $s\geq 0$, $\lgain|B\geq sD(X-M)+R|B=\lgain^{\prime\prime}|B$.
Therefore, $\sup\lgain|B\geq\sup\lgain^{\prime\prime}|B\geq 0$,
the last inequality holding because the gain $\lgain^{\prime\prime}|B$ concerns the extension $\mu^{\prime\prime}$ of $\mu$.

If $s<0$, $\lgain|B\geq sD(X-m)+R|B=\lgain^\prime|B$.
Since $\lgain^\prime|B$ is a gain regarding $\mu^\prime$,
the conclusion is the same.
\end{proof}

Among the W-coherent extensions on $\setran^\prime\supset\setran$ of a lower prevision $\lpr$,
the \emph{natural extension} $\lnex$ is its least-committal one.
This means that for any $\underline{Q}$ such that $\underline{Q}$ is W-coherent on $\setran^\prime$ and $\underline{Q}=\lpr$ on $\setran$,
it holds that $\underline{Q}\geq\lnex$ ($\lnex$ is dominated by $\underline{Q}$).
The concept of C-convex natural extension is analogous for C-convex previsions.
The natural extension (the C-convex natural extension) always exists and is unique \cite{Pel05,Pel09, Wil07}.

Sometimes one may be interested in searching for an extension $\underline{U}$ of $\lpr$ with opposite features,
i.e. ensuring that no (W-coherent, alternatively C-convex) extension $\underline{Q}$ of $\lpr$ is such that $\underline{Q}\geq\underline{U}$.
This is the notion of \emph{upper extension},
originally developed in \cite{Wei01}.
The upper extension is generally not unique,
and its practical computation may be not immediate.
We shall meet a case of upper extension in Section \ref{subsec:natural_extension}.

A property of W-coherent previsions is the \emph{(weak) product rule}, proven in \cite{Pel09bis}:
\begin{proposition}
\label{pro:product_rule}
Let $\lpr$ be W-coherent on $\mathcal{S}\supset\{AX|B, A|B, X|A\wedge B\}$.
Then, necessarily:
\begin{itemize}
\item[a)] if $\lpr(X|A\wedge B)>0$, then
\begin{eqnarray}
\label{eq:product_rule_1}
\lpr(AX|B)\geq\lpr(A|B)\cdot\lpr(X|A\wedge B)
\end{eqnarray}
\item[b)] if $\lpr(X|A\wedge B)<0$, then
\begin{eqnarray}
\label{eq:product_rule_2}
\lpr(AX|B)\leq\lpr(A|B)\cdot\lpr(X|A\wedge B)
\end{eqnarray}
\item[c)]
$\lpr(AX|B)=0\ \mbox{iff}\ \lpr(A|B)\cdot\lpr(X|A\wedge B)=0$
\end{itemize}
\end{proposition}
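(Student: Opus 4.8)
The plan is to derive all three parts directly from the W-coherence condition (Definition~\ref{def:W-coherence}), by testing $\lpr$ against a small betting scheme that uses only the gambles $AX|B$, $A|B$ and $X|A\wedge B$. Write $\mu_1=\lpr(AX|B)$, $\mu_2=\lpr(A|B)$, $\mu_3=\lpr(X|A\wedge B)$, and recall that W-coherence already forces $\mu_2\in[0,1]$. The device behind everything is that the three conditioning events have union $B$, so a joint bet produces a gain $\lgain$ conditioned on $B=(A\wedge B)\vee(\nega A\wedge B)$; moreover, if the stake placed on $AX|B$ equals the one placed on $X|A\wedge B$ (one bet in favour and one against), then the terms in $X$ cancel on the stratum $A\wedge B$ (where $I_AX=X$) and vanish on $\nega A\wedge B$ (where $I_AX=0$ and the bet on $X|A\wedge B$ is called off). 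Hence $\lgain|B$ is constant on each of the two strata, and the requirement $\sup(\lgain|B)\ge0$ collapses to an elementary linear condition on the two remaining stakes.

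For part~(a), suppose $\mu_3>0$ but $\mu_1<\mu_2\mu_3$, and bet against $AX|B$ at stake $\sigma$, in favour of $X|A\wedge B$ at stake $\sigma$, and in favour of $A|B$ at stake $\alpha\ge0$. A direct computation gives $\lgain=\alpha(1-\mu_2)+\sigma(\mu_1-\mu_3)$ on $A\wedge B$ and $\lgain=\sigma\mu_1-\alpha\mu_2$ on $\nega A\wedge B$, so $\sup(\lgain|B)$ is the larger of these two numbers. With $\sigma=1$, both can be made negative by a suitable $\alpha\ge0$ exactly because $\mu_1<\mu_2\mu_3$ (one checks this over the cases $\mu_2\in(0,1)$, $\mu_2=1$ and $\mu_2=0$, using $\mu_3>\mu_1$), contradicting W-coherence. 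Part~(b) is the mirror image: assuming $\mu_3<0$ and $\mu_1>\mu_2\mu_3$, bet against $X|A\wedge B$ at stake $\sigma$, in favour of $AX|B$ at stake $\sigma$, and in favour of $A|B$ at stake $\alpha$; the two strata now yield $\alpha(1-\mu_2)+\sigma(\mu_3-\mu_1)$ and $-\sigma\mu_1-\alpha\mu_2$, and the analogous linear system is feasible precisely when $\mu_1>\mu_2\mu_3$.

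For part~(c), the ``only if'' direction is immediate: if $\mu_1=0$ but $\mu_2\mu_3\neq0$, then $\mu_2>0$ and $\mu_3$ is strictly signed, so (a) or (b) forces $\mu_1$ to carry the same nonzero sign, a contradiction. For the ``if'' direction, assume $\mu_2\mu_3=0$. If $\mu_3=0$, a clean scheme settles it: betting in favour of $AX|B$ and against $X|A\wedge B$ at equal unit stakes gives, via $I_BI_A=I_{A\wedge B}$, simply $\lgain|B=-\mu_1$, so W-coherence yields $\mu_1\le0$; swapping the two roles gives $\lgain|B=\mu_1$, hence $\mu_1\ge0$; so $\mu_1=0$. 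If instead $\mu_2=0$ and $\mu_3>0$, then (a) already gives $\mu_1\ge0$, while betting in favour of $AX|B$ at unit stake and against $A|B$ at a large stake $\lambda$ yields $-\mu_1$ on $\nega A\wedge B$ and at most $\sup(X|A\wedge B)-\lambda-\mu_1$ on $A\wedge B$, negative once $\lambda$ is large; so $\sup(\lgain|B)\ge0$ forces $\mu_1\le0$, and again $\mu_1=0$.

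I expect the main obstacle to be the remaining sub-case of~(c), $\mu_2=0$ with $\mu_3<0$ (the ``if'' direction when $\lpr(X|A\wedge B)$ is strictly negative): there a unit bet against $A|B$ no longer yields a sure loss, and one seems forced to argue through the lower-envelope representation of $\lpr$ by dominating dF-coherent precise previsions — each satisfying the exact product rule $P(AX|B)=P(A|B)\,P(X|A\wedge B)$ — together with $\inf P(A|B)=0$; this is presumably where the proof in~\cite{Pel09bis} is most delicate, and it becomes trivial in the setting of conditional events ($X$ an indicator, so $\mu_3\ge0$), which is the case primarily relevant to this paper. Everything else is routine but must be carried out with care about the degenerate strata (e.g., $\nega A\wedge B=\varnothing$ forces $\mu_2=1$, while $A\wedge B=\varnothing$ cannot occur since $X|A\wedge B\in\setran$) and about checking that the stakes delivered by the linear analysis in (a) and (b) stay non-negative at the boundary values $\mu_2\in\{0,1\}$ and for every sign of $\mu_1$.
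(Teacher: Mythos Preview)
First, note that the paper does \emph{not} contain its own proof of this proposition: it is simply quoted, with the remark ``proven in~\cite{Pel09bis}''. So there is nothing in this paper to compare your argument against. That said, your direct betting verifications of parts~(a) and~(b), of the ``only if'' half of~(c), and of the ``if'' half of~(c) in the sub-cases $\mu_3=0$ and $(\mu_2=0,\ \mu_3>0)$ are all correct and are exactly the elementary gain computations one would expect.

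Your instinct that the remaining sub-case $(\mu_2=0,\ \mu_3<0)$ is the delicate one is right, but the obstacle is worse than you think: the ``if'' direction of~(c) is \emph{false} as stated in this paper. Take $\Omega=\{a,b\}$, $B=\Omega$, $A=\{a\}$, $X(a)=-1$, so that $AX=-I_A$ and $X|A\wedge B$ is the constant $-1$. Let $\lpr$ be the lower envelope of $\{P_t:t\in[0,\tfrac12]\}$ with $P_t(A)=t$; this is W-coherent, and one gets $\lpr(A|B)=0$, $\lpr(X|A\wedge B)=-1$, hence $\lpr(A|B)\cdot\lpr(X|A\wedge B)=0$, yet $\lpr(AX|B)=\inf_t(-t)=-\tfrac12\neq0$. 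The envelope argument you sketched therefore cannot close the gap: each dominating $P$ does satisfy $P(AX|B)=P(A|B)\,P(X|A\wedge B)$, but the infimum of such a product, with one non-negative and one non-positive factor, is not the product of the separate infima. Presumably the original statement in~\cite{Pel09bis} carries an additional hypothesis (for instance $X\ge0$ on $A\wedge B$), or part~(c) is meant to read only as $\lpr(X|A\wedge B)=0\Rightarrow\lpr(AX|B)=0$, which your argument already proves. In any case, the present paper only ever invokes part~(a), and always with a non-negative conditional gamble, so the issue is harmless for the paper's purposes.
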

In the consistency concepts above,
the measure $\mu$ is a (lower, upper or precise) \emph{probability} if,
for any $X|B\in \setran$, $X$ is (the indicator of) an event.
In all such cases,
the following are necessary consistency conditions:
\begin{equation}
\label{eq:consistency_conditions}
\mu(A|B)\in [0;1],\ \mu(\varnothing|B)=0,\ \mu(B|B)=1.
\end{equation}
In general, results for upper probabilities follow from those for lower probabilities by the conjugacy equality
$\upr(A|B)=1-\lpr(\nega{A}|B)$.

\subsection{The Goodman-Nguyen Relation}
\label{subsec:GN_relation}
\begin{definition}
\label{def:goongu} \emph{(Goodman--Nguyen relation.)} We say that
$A|B\lgn C|D$ iff
\begin{equation}
\label{eq:goongu}
A\wedge B\Rightarrow C\wedge D \mbox{ and } \nega{C}\wedge D\Rightarrow\nega{A}\wedge B.
\end{equation}
\end{definition}
\begin{example}
\label{ex:impl_chain}
Some simple examples of GN-related events:
\begin{itemize}
\item[a)] If $A\Rightarrow C\Rightarrow D\Rightarrow B$,
then $A|B\lgn C|D$;
\item[b)] $A|B\lgn C|B$ iff $A\wedge B \Rightarrow C\wedge B$;
\item[c)] $\varnothing|B\lgn C|D$ iff $\nega{C}\wedge D\Rightarrow B$.
\end{itemize}
\end{example}
The GN relation was apparently first introduced by Goodman and Nguyen in \cite{Goo88}.
In that paper,
the focus is on defining the operations $\wedge$, $\vee$ with conditional events,
which is done as follows:
\begin{equation}
\label{eq:oper_cond_events}
\begin{aligned}
A|B\wedge C|D&=(A\wedge B\wedge C\wedge D)|[(\nega{A}\wedge B)\vee(\nega{C}\wedge D)\vee (B\wedge D)]\\
A|B\vee C|D&=[(A\wedge B)\vee (C\wedge D)]|[(A\wedge B)\vee (C\wedge D)\vee (B\wedge D)]
\end{aligned}
\end{equation}
The relation $\lgn$ is then defined as
\begin{equation}
\label{eq:GN_original}
A|B\lgn C|D \text{ iff } A|B=A|B\wedge C|D \text{ iff } C|D=C|D\vee A|B
\end{equation}
and it is stated without proof that this definition is equivalent to Definition~\ref{def:goongu}.
The equivalence is discussed at length in \cite{Mil97},
where it is also asserted that equations \eqref{eq:oper_cond_events} can already be deduced from the truth tables presented by de~Finetti in \cite{deF36}.
While de Finetti does not seem to have considered explicitly the GN relation,
the intuition behind Definition \ref{def:goongu} has been explained in the literature resorting to betting arguments,
much in his style
(cf. \cite{Mil97} and also \cite{Mil08},
where $\lgn$ is termed `betting order').

In fact, \eqref{eq:goongu} states that whenever we bet both on $A|B$
and on $C|D$ (iff $B\wedge D$ is true), the following holds:
if we win the bet on $A|B$, we also win the bet on $C|D$ (because $A\wedge B\Rightarrow C\wedge D$);
and if we loose the bet on $C|D$, we also loose the bet on
$A|B$ (because of $\nega{C}\wedge D\Rightarrow\nega{A}\wedge B$).
When $B=D=\Omega$, just one of the implications in \eqref{eq:goongu} is
needed, because of the tautology $A\Rightarrow C\leftrightarrow
\nega{C}\Rightarrow\nega{A}$.
We shall reconsider and broaden the betting interpretation in Section 
\ref{subsec:GN relation conditional events}.

The GN relation was a secondary item,
at best,
in Goodman and Nguyen's work on \emph{conditional event algebras}.
Recently \cite{Gil13b}, it has been related,
together with Adam's quasi-conjunction,
to probabilistic entailment under coherence.

In this paper, we do not follow these lines of research,
but investigate rather the relevance of $\lgn$ in imprecise probability theory.
In fact,
the recalled interpretation of $\lgn$ suggests that \eqref{eq:defleGN} should hold
for a generic, but consistent uncertainty measure $\mu$.
In the case that $\mu$ is a conditional probability $P$, \eqref{eq:defleGN}
was stated without proof in \cite{Goo88}
and proven in \cite{Goo91}
(assuming $P$ defined on
a structured set, termed $\strset$ in Definition \ref{def:inner_outer}).
Equation \eqref{eq:defleGN} was proved also in \cite{Col93} (under general assumptions for $P$)
and independently (in a less general case) in \cite{Mil97}.
Propositions \ref{pro:preserve_inequality} and \ref{pro:GN_measures} in this paper establish \eqref{eq:defleGN} for imprecise measures.

The GN relation in extension problems is explored in
\cite{Col96} in the context of precise probabilities.
It is shown there that, given a dF-coherent probability $P$ on a
\emph{finite} set of conditional events, the bounds for its coherent
extensions on one additional event $C|D$ depend on the values of $P$
on two events.
The two events are determined by the GN relation and
are termed $(C|D)_*$ and $(C|D)^*$ in equation \eqref{eq:max_cond_event}.

In Section \ref{subsec:natural_extension} we study more general extension problems, on \emph{arbitrary} sets
of events and for precise or imprecise probability assessments.


\section{Effects of the GN Relation on Imprecise Probability Assessments}
\label{sec:GN_relation_ip}
The GN relation induces a corresponding ordering, satisfying equation \eqref{eq:defleGN},
on C-convex lower/upper conditional probabilities,
and therefore (Remark \ref{rem:c-convexity}) on W- and Walley-coherent imprecise probabilities,
and on dF-coherent probabilities:
\begin{proposition}
\label{pro:preserve_inequality}
Let $\mu$ be a C-convex lower (or upper) probability defined on $\mathcal{D}\supseteq\left\lbrace A|B, C|D\right\rbrace$.
Then, $A|B\lgn C|D$ implies $\mu(A|B)\leq\mu(C|D)$.
\end{proposition}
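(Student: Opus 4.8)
The plan is to apply the defining consistency inequality (Definition \ref{def:C-convexity}) to a carefully chosen betting scheme with exactly two non-trivial stakes: one in favour of $C|D$ and one against $A|B$. Concretely, I would set $s_1 = s_0 = 1$, bet in favour of $C|D$ with stake $s_1$ and against $A|B$ with stake $s_0$, so that $B_{\text{total}} = B \vee D$ and the gain is
\begin{equation*}
\lgain = D\bigl(I_C - \mu(C|D)\bigr) - B\bigl(I_A - \mu(A|B)\bigr).
\end{equation*}
Here I am using $\mu$ lower; the convexity constraint $\sum s_i = s_0 > 0$ is satisfied since $1 = 1$. C-convexity then yields $\sup(\lgain \mid B \vee D) \geq 0$.

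The key step is then to show that the hypothesis $A|B \lgn C|D$ forces $\lgain \leq \mu(C|D) - \mu(A|B)$ pointwise on $B \vee D$, from which $\sup(\lgain \mid B \vee D) \geq 0$ immediately gives $\mu(A|B) \leq \mu(C|D)$. To see the pointwise bound I would do a case analysis over the atoms of $B \vee D$, i.e. over the truth values of $A$, $B$, $C$, $D$ consistent with at least one of $B$, $D$ being true. The two implications in \eqref{eq:goongu}, namely $A \wedge B \Rightarrow C \wedge D$ and $\nega{C} \wedge D \Rightarrow \nega{A} \wedge B$, are exactly what is needed to rule out the "bad" atoms. Specifically: on an atom where $B$ is true and $A$ is true, the first implication forces $D$ and $C$ true, so $\lgain = (1 - \mu(C|D)) - (1 - \mu(A|B)) = \mu(A|B) - \mu(C|D) \leq \mu(C|D) - \mu(A|B)$ would need $2\mu(A|B) \leq 2\mu(C|D)$ — wait, here the value is simply $\mu(A|B)-\mu(C|D)$, and this is $\le 0$ precisely when we are done, so I must instead argue this atom contributes $\le \mu(C|D)-\mu(A|B)$ trivially only if that quantity is $\ge \mu(A|B)-\mu(C|D)$. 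The cleaner route is: show $\lgain \le \mu(C|D)-\mu(A|B)$ fails only on atoms excluded by \eqref{eq:goongu}, by checking that on every admissible atom $D\,I_C - B\,I_A \le 0$, whence $\lgain \le -D\,\mu(C|D) + B\,\mu(A|B) \le \mu(A|B)$… I will need to organize the bookkeeping so that $\sup(\lgain\mid B\vee D)$ is literally attained as $\mu(C|D)-\mu(A|B)$ or something dominated by it.

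The main obstacle — and the part deserving care — is precisely this pointwise accounting: making sure that on each of the (at most) several surviving atoms the value of $\lgain$ never exceeds $\mu(C|D) - \mu(A|B)$, using $\mu(A|B), \mu(C|D) \in [0,1]$ from \eqref{eq:consistency_conditions} when needed (for atoms where only one of the two bets is active, e.g. $B$ true, $D$ false, or $D$ true, $B$ false). Once the case analysis confirms $\sup(\lgain \mid B\vee D) = \mu(C|D) - \mu(A|B)$ (the supremum being achieved on an atom where the bet on $C|D$ is won and the bet on $A|B$ is lost, or called off — such an atom exists because $A|B \lgn C|D$ forbids the reverse), the C-convexity inequality $\sup(\lgain\mid B\vee D)\ge 0$ closes the argument. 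For the upper-probability version I would either run the symmetric betting scheme (stake in favour of $A|B$, against $C|D$) or simply invoke conjugacy $\upr(A|B) = 1 - \lpr(\nega{A}|B)$ together with the easily checked fact that $A|B \lgn C|D$ implies $\nega{C}|D \lgn \nega{A}|B$.
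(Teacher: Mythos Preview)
Your betting scheme is oriented the wrong way, and this is a genuine gap, not just messy bookkeeping. With
\[
\lgain = D\bigl(I_C - \mu(C|D)\bigr) - B\bigl(I_A - \mu(A|B)\bigr),
\]
consider the atom $\omega_2=\nega{A}\wedge B\wedge C\wedge D$, which is \emph{not} excluded by $A|B\lgn C|D$. There $\lgain(\omega_2)=1-\mu(C|D)+\mu(A|B)\geq 0$, so the C-convexity condition $\sup(\lgain\mid B\vee D)\geq 0$ is automatically satisfied and carries no information. The same occurs at $\nega{B}\wedge C\wedge D$ (value $1-\mu(C|D)\geq 0$) and at $\nega{A}\wedge B\wedge\nega{D}$ (value $\mu(A|B)\geq 0$). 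Your claimed pointwise bound $\lgain\leq\mu(C|D)-\mu(A|B)$ is therefore false on these atoms; the confusion you noticed mid-argument was a symptom of this.

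The fix is to swap the roles: take $X_1|B_1=A|B$ and $X_0|B_0=C|D$, giving
\[
\lgain = B\bigl(I_A - \mu(A|B)\bigr) - D\bigl(I_C - \mu(C|D)\bigr).
\]
Now the GN hypothesis makes $\lgain$ small. A direct check on each of the seven admissible atoms $\omega_1,\ldots,\omega_7$ of $B\vee D$ shows $\lgain\leq\mu(C|D)-\mu(A|B)$ everywhere (the bounds $\mu(A|B)\leq 1$ and $\mu(C|D)\geq 0$ from \eqref{eq:consistency_conditions} handle the atoms where only one bet is active), and then $0\leq\sup(\lgain\mid B\vee D)\leq\mu(C|D)-\mu(A|B)$ finishes the argument. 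This is precisely the orientation the paper uses in proving the more general Proposition~\ref{pro:GN_measures}: intuitively, one must bet \emph{in favour of} the GN-smaller object and \emph{against} the GN-larger one, so that the hypothesis forces the gain to be unfavourable and the consistency inequality becomes informative. Your remark on the upper-probability case via conjugacy and $\nega{C}|D\lgn\nega{A}|B$ is correct.
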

Proposition \ref{pro:preserve_inequality} was proved in \cite{Pel13}.
It is a special case of Proposition \ref{pro:GN_measures},
proved in Section \ref{subsec:Ordering from GN relation}.
\begin{example}
\label{ex:two_evaluations}
In several common situations,
we may be interested in evaluating an event $A$ conditioned on different,
but increasingly more precise assumptions.
This originates a sequence of conditional events $A|B_{n}$,
with the conditioning events $B_{n}$, $n=0,1,\ldots$, totally ordered by implication.

Does this special structure imply some ordering,
according to the GN relation, among the conditional events $A|B_{n}$?
To answer this question,
let us compare $A|B_{0}$ with $A|B_{1}$,
assuming that $B_{1}\Rightarrow B_{0}$.
 
There are three possible situations:
\begin{itemize}
\item[a)] $A|B_{0}\lgn A|B_{1}$
iff $A\wedge B_{0}\wedge\nega{B_1}=\varnothing$.
 
To see this,
apply Definition \ref{def:goongu}:
 $A|B_{0}\lgn A|B_{1}$ iff
 $A\wedge B_{0}\Rightarrow A\wedge B_{1}$ and $\nega{A}\wedge B_{1}\Rightarrow\nega{A}\wedge B_{0}$.
 Since $B_{1}\Rightarrow B_{0}$,
 we get
 $A|B_{0}\lgn A|B_{1}$ iff $A\wedge B_{0}\Rightarrow A\wedge B_{1}$
 iff $\nega{A}\wedge B_{0}\wedge\nega{B_{1}}=\varnothing$.
 %
 %
 \item[b)] $A|B_{1}\lgn A|B_{0}$
 iff $\nega{A}\wedge B_{0}\wedge\nega{B_{1}}=\varnothing$.
 
 The proof of b) is in line with that of a),
 noting that this time the first implication in Definition \ref{def:goongu} is always true.
 \item[c)] Neither $A|B_{0}\lgn A|B_{1}$
 nor $A|B_{1}\lgn A|B_{0}$ iff
 ($A\wedge B_{0}\wedge\nega{B_1}\neq\varnothing$ and
 $\nega{A}\wedge B_{0}\nega{B_{1}}\neq\varnothing$).
 \end{itemize}
 Let now $\mu$ be a C-convex probability,
 either lower or upper.
 In cases a) and b), Proposition \ref{pro:preserve_inequality} lets us derive an inequality linking
 $\mu(A|B_{0})$ and $\mu(A|B_{1})$.
 For instance,
 with case a) we get
 \begin{equation}
 \label{eq:increasing_inequality}
 A\wedge B_{0}\wedge\nega{B_{1}}=\varnothing\rightarrow\mu( A|B_{0})\leq\mu( A|B_{1}).
 \end{equation}
The inequality in \eqref{eq:increasing_inequality} is already known in some special cases.
 In particular,
 let $A\Rightarrow B_{1}\Rightarrow B_{0}$ (see Figure \ref{fig:inclusion}),
 \begin{figure}[h]
 \centering
 \includegraphics[width=0.5\textwidth]{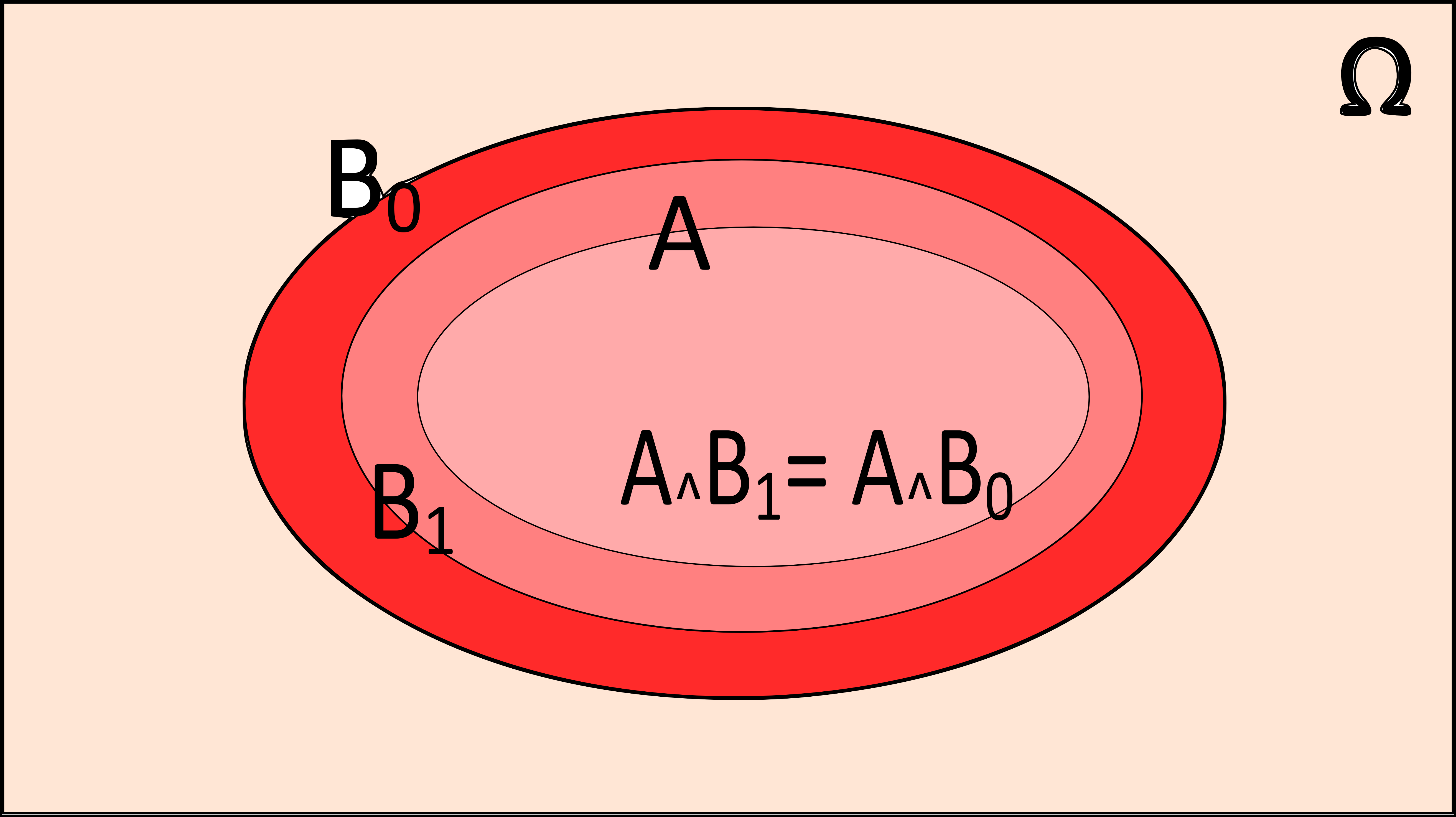}
 \caption{Example 2, a).}
 \label{fig:inclusion}
 \end{figure}
 which implies that $A\wedge B_{0}\wedge \nega{B_{1}}=\varnothing$.
 Then the inequality follows from the product rule
 $P(A|B_0)=P(A|B_1)P(B_1|B_0)$
 when $\mu$ is a dF-coherent probability,
 while it was obtained in a different way in \cite{Pel10} for W-coherent probabilities.
\end{example}
\begin{remark}
\label{rem:incomparable}
\emph{(GN-unrelated events.)}
As appears from Example \ref{ex:impl_chain}, c) and Example \ref{ex:two_evaluations}, c),
several conditional events are not GN-related.
It may be so even in the presence of implication relationships between either their conditioned events
($\varnothing$ implies any event in Example \ref{ex:impl_chain}, c)),
or their conditioning ones
($B_{0}\Rightarrow B_{1}$ in Example \ref{ex:two_evaluations}, c)).

Further,
since $B_{1}\Rightarrow B_{0}$ induces $\mu(B_{1})\leq\mu(B_{0})$
for any monotone measure $\mu$,
one might suspect that the agreeing ordering $\mu(A|B_{1})\leq\mu(A|B_{0})$
should hold with GN-related $A|B_{1}$ and $A|B_{0}$.
We have seen instead in Example \ref{ex:two_evaluations}, b)
that this is not always the case.
Yet,
the following holds:
if $A$ is arbitrary, while $B_{1}\Rightarrow B_{0}$,
then
\begin{equation}
\label{eq:mu_oinequality}
\mu(A\wedge B_1|B_0)\leq\mu(A|B_1).
\end{equation}
Inequality \eqref{eq:mu_oinequality} follows from Proposition \ref{pro:preserve_inequality},
since $A\wedge B_{1}|B_{0}\lgn A\wedge B_{1}|B_{1}=A|B_{1}$.
\hfill $\blacklozenge$ 
\end{remark}

\begin{remark}
\emph{(Relationship with conditional implication.)}
The GN relation is linked to conditional implication as follows.
Recall that the conditional implication $A|H\Rightarrow B|H$ may be defined as
$A\wedge H\Rightarrow B\wedge H$
(or alternatively, by the truth table of $A\Rightarrow B$, provided that $H$ is true).

Suppose then $A|B\lgn C|D$.
From $(A\wedge B\Rightarrow C\wedge D)\rightarrow[(A\wedge B)\wedge(B\wedge D)\Rightarrow(C\wedge D)\wedge(B\wedge D)]
\leftrightarrow(A\wedge B\wedge D\Rightarrow C\wedge B\wedge D)
\leftrightarrow(A|B\wedge D\Rightarrow C|B\wedge D)$,
we get that
\begin{equation}
\label{eq:conditional_implication}
A|B\lgn C|D\rightarrow (A|B\wedge D\Rightarrow C|B\wedge D).
\end{equation}
Similar computations using $\nega{C}\wedge D\Rightarrow\nega{A}\wedge B$ show that
\begin{equation}
\label{eq:conditional_implication_nega}
A|B\lgn C|D\rightarrow (\nega{C}|B\wedge D\Rightarrow \nega{A}|B\wedge D).
\end{equation}
Hence, the GN relation implies the two conditional implications in \eqref{eq:conditional_implication} and \eqref{eq:conditional_implication_nega}.
As a follow up,
note that, for a dF-coherent $P$
\begin{equation}
\label{eq:conditional_implication_precise}
A|B\lgn C|D\rightarrow P(A\wedge D|B)\leq P(C\wedge D|B).
\end{equation}
In fact, from \eqref{eq:conditional_implication}, $A|B\wedge D\Rightarrow C|B\wedge D$ ensures $P(A|B\wedge D)\leq P(C|B\wedge D)$.
Multiplying both terms by $P(D|B)$ gives the inequality in \eqref{eq:conditional_implication_precise}.

The conditional implications in \eqref{eq:conditional_implication}, \eqref{eq:conditional_implication_nega} jointly have
the same betting interpretation recalled in Section \ref{subsec:GN_relation} for the GN relation.
However,
the GN relation may compare events with \emph{different} conditioning events.
This interesting differentiating feature is useful,
for instance,
in the problems of the next section.
\hfill $\blacklozenge$
\end{remark}

\subsection{The GN Relation in Extension Problems}
\label{subsec:natural_extension}
We shall discuss now generalisation of the extension problem presented in the Introduction.
\begin{definition}
\label{def:inner_outer}
Let $\prt$ be any partition.
Given an event $E$,
define its \emph{inner event}
$E_*=\bigvee\limits_{e \in\prt:\ e\Rightarrow E} e$
and its \emph{outer event}
$E^*=\bigvee\limits_{e\in\prt:\ e\wedge E\neq\varnothing} e$.
Define further $\strset=\strset(\prt)=\{A|B:A,B\in\genfield,B\neq\varnothing\}$,
and for an arbitrary $C|D$ ($C|D\neq\varnothing|D$, $C|D\neq D|D$),
$m(C|D)=\{A|B\in\strset(\prt): A|B\lgn C|D\}$,
$M(C|D)=\{A|B\in\strset(\prt): C|D\lgn A|B\}$.
\end{definition}
Note that the definitions of inner and outer event are not independent:
$E^*=\nega{((\nega{E})_*)}$.
It is easy to see that
\begin{proposition}
\label{pro:set_asterisks} The sets $m$, $M$ are non-empty and have,
respectively, a maximum $(C|D)_*$ and a minimum $(C|D)^*$
conditional event w.r.t. $\lgn$,
\begin{equation}
\label{eq:max_cond_event}
\begin{array}{lll}
(C|D)_*=(C\wedge D)_*|[(C\wedge D)_*\vee(\nega{C}\wedge D)^*],\\
(C|D)^*=(C\wedge D)^*|[(C\wedge D)^*\vee(\nega{C}\wedge D)_*].
\end{array}
\end{equation}
\end{proposition}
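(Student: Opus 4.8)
The plan is to deal with $m(C|D)$ and its candidate maximum $(C|D)_*$ by a direct check against Definition~\ref{def:goongu}, and then to obtain the dual statement about $M(C|D)$ and $(C|D)^*$ from the equivalence $A|B\lgn C|D\Leftrightarrow\nega C|D\lgn\nega A|B$ (immediate from \eqref{eq:goongu}) together with the identity $E^*=\nega{((\nega E)_*)}$. Before this I would record two preliminaries. First, $\lgn$ is a partial order, so a maximum or minimum of a set, once exhibited, is unique. Second, $(C|D)_*$ and $(C|D)^*$ genuinely belong to $\strset(\prt)$: the inner/outer operations and $\vee$ keep events inside the field $\genfield$, and the conditioning events are non-empty because $C|D$ is non-trivial: $C|D\neq\varnothing|D$ gives $C\wedge D\neq\varnothing$, hence $(C\wedge D)^*\neq\varnothing$, and $C|D\neq D|D$ gives $\nega C\wedge D\neq\varnothing$, hence $(\nega C\wedge D)^*\neq\varnothing$.

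Next I would show $(C|D)_*\in m(C|D)$. Writing $A_0=(C\wedge D)_*$ and $B_0=(C\wedge D)_*\vee(\nega C\wedge D)^*$, absorption gives $A_0\wedge B_0=(C\wedge D)_*$, which implies $C\wedge D$; and distributivity gives $\nega{A_0}\wedge B_0=\nega{(C\wedge D)_*}\wedge(\nega C\wedge D)^*$, which is implied by $\nega C\wedge D$, since $(C\wedge D)_*\Rightarrow C$ forces $\nega C\wedge D\Rightarrow\nega{(C\wedge D)_*}$ while trivially $\nega C\wedge D\Rightarrow(\nega C\wedge D)^*$. These are exactly the two implications of Definition~\ref{def:goongu}, so $(C|D)_*\lgn C|D$; in particular $m(C|D)\neq\varnothing$.

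Then I would prove that $(C|D)_*$ dominates every element of $m(C|D)$. Let $A|B\in\strset(\prt)$ with $A|B\lgn C|D$. The decisive point is that $A\wedge B$ and $\nega A\wedge B$ lie in $\genfield$: the extremal description of the inner event upgrades $A\wedge B\Rightarrow C\wedge D$ to $A\wedge B\Rightarrow(C\wedge D)_*=A_0\wedge B_0$, and that of the outer event upgrades $\nega C\wedge D\Rightarrow\nega A\wedge B$ to $(\nega C\wedge D)^*\Rightarrow\nega A\wedge B$, whence $\nega{A_0}\wedge B_0=\nega{(C\wedge D)_*}\wedge(\nega C\wedge D)^*\Rightarrow\nega A\wedge B$. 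By Definition~\ref{def:goongu} this says $A|B\lgn(C|D)_*$, so $(C|D)_*$ is the $\lgn$-maximum of $m(C|D)$. Finally, applying the three preceding steps to $\nega C|D$ in place of $C|D$ and translating back through $A|B\lgn C|D\Leftrightarrow\nega C|D\lgn\nega A|B$ (using $(\nega C\wedge D)_*\vee(C\wedge D)^*$ for the conditioning event and $\nega{(\nega C\wedge D)_*}\wedge(C\wedge D)^*=(C\wedge D)^*$ for the conditioned one) yields that $M(C|D)$ is non-empty with $\lgn$-minimum $(C|D)^*$; equivalently, one simply reruns the previous two steps with $*$ and ${}_*$ interchanged and the implications reversed.

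No step is a genuine obstacle — the whole argument is bookkeeping with absorption, distributivity and the extremal descriptions of $E_*$ and $E^*$ — which is why the statement is flagged as immediate. The only points that call for care are: the non-emptiness of the conditioning events, which is precisely where the non-triviality hypotheses on $C|D$ enter; selecting the right extremal property (largest $\genfield$-subevent versus smallest $\genfield$-superevent) at each occurrence; and, in the passage to $M(C|D)$, checking the small disjointness identity $(C\wedge D)^*\wedge(\nega C\wedge D)_*=\varnothing$ that makes $(C|D)^*$ come out in the form displayed in \eqref{eq:max_cond_event}.
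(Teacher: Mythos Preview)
Your proof is correct and complete. The paper itself offers no proof of this proposition, introducing it with ``It is easy to see that'', so your argument is precisely the kind of routine verification the authors leave to the reader; your use of the extremal characterisations of $E_*$ and $E^*$, the absorption/distributivity bookkeeping, and the duality $A|B\lgn C|D\Leftrightarrow\nega C|D\lgn\nega A|B$ (together with the disjointness $(C\wedge D)^*\wedge(\nega C\wedge D)_*=\varnothing$) are exactly the ingredients one expects.
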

Analogously to the unconditional case,
$(C|D)_*$ may be termed the \emph{inner} event and $(C|D)^*$ the \emph{outer} event of $C|D$.
Both are made up of unconditional inner and outer events, by Definition \ref{def:inner_outer}.
For instance,
$(C\wedge D)_*=\bigvee\limits_{e\in\prt:\ e\Rightarrow C\wedge D} e$,
$(\nega{C}\wedge D)^*=\bigvee\limits_{e\in\prt:\ e\wedge\nega{C}\wedge D\neq\varnothing} e$.

A graphical illustration of $(C|D)_*$ is supplied in Figure \ref{fig:conditional_starred}.
\begin{figure}[h]
\centering
\includegraphics[width=0.5\textwidth]{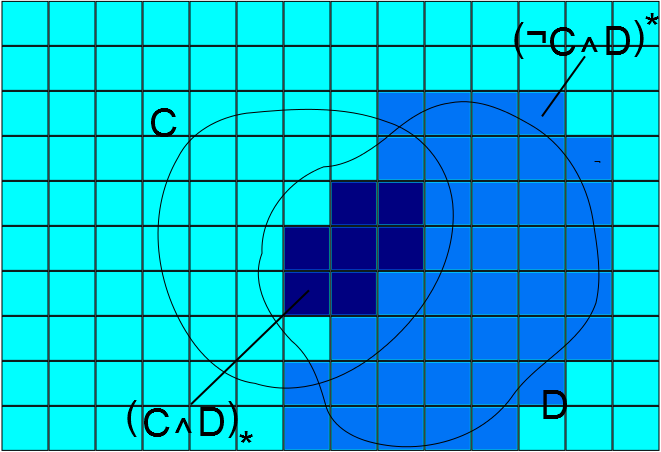}
\caption{The unconditional events forming $(C|D)_*$.}
\label{fig:conditional_starred}
\end{figure}

Suppose now that an uncertainty measure $\mu$
is assessed on the set of conditional events $\strset(\prt)$ (Definition \ref{def:inner_outer}).
We wish to extend $\mu$ to an arbitrary event $C|D$.
The non-triviality assumption $C|D\neq\varnothing|D$, $C|D\neq D|D$, already introduced in Definition \ref{def:inner_outer},
is assumed also in the sequel.
It rules out limiting cases whose extension is already known by \eqref{eq:consistency_conditions}.

It holds that
\begin{proposition}
\label{pro:ext_prob}
Let $\mu(\cdot|\cdot)$ be a dF-coherent probability,
or alternatively a W-coherent or C-convex lower or upper probability on $\strset(\prt)$.
Any of its extensions on $\strset(\prt)\cup\{C|D\}$ is, respectively, a dF-coherent probability, or a W-coherent or C-convex lower or upper probability
if and only if $\mu(C|D)\in [\mu((C|D)_*); \mu((C|D)^*)]$.
\end{proposition}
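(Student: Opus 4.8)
The plan is to establish the two directions separately, using Proposition \ref{pro:preserve_inequality} for necessity and Lemma \ref{lem:extension} for sufficiency.

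\emph{Necessity.} Suppose $\mu_{ext}$ is an extension of $\mu$ on $\strset(\prt)\cup\{C|D\}$ of the required type. By Proposition \ref{pro:set_asterisks}, $(C|D)_*\in\strset(\prt)$ and $(C|D)_*\lgn C|D$; similarly $C|D\lgn (C|D)^*$ with $(C|D)^*\in\strset(\prt)$. Since $\mu_{ext}$ is C-convex (by Remark \ref{rem:c-convexity} this covers all the listed cases) on a domain containing all three conditional events, Proposition \ref{pro:preserve_inequality} gives $\mu_{ext}((C|D)_*)\leq\mu_{ext}(C|D)\leq\mu_{ext}((C|D)^*)$. But $\mu_{ext}$ agrees with $\mu$ on $\strset(\prt)$, so $\mu_{ext}((C|D)_*)=\mu((C|D)_*)$ and $\mu_{ext}((C|D)^*)=\mu((C|D)^*)$, whence $\mu(C|D)\in[\mu((C|D)_*);\mu((C|D)^*)]$.

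\emph{Sufficiency.} By Lemma \ref{lem:extension}, it suffices to exhibit two extensions $\mu'$ and $\mu''$ of $\mu$ on $\strset(\prt)\cup\{C|D\}$ of the required type with $\mu'(C|D)=\mu((C|D)_*)$ and $\mu''(C|D)=\mu((C|D)^*)$; then every extension assigning $C|D$ a value in $[\mu((C|D)_*);\mu((C|D)^*)]$ is of that type, and by necessity no other value is admissible, giving the ``if and only if''. For $\mu'$, I would take the natural extension (C-convex natural extension, resp.) $\lnex$ of $\mu$ to $\strset(\prt)\cup\{C|D\}$; it exists, is of the required type, and is the least such extension. By necessity $\lnex(C|D)\geq\mu((C|D)_*)$; to get equality I exhibit \emph{some} extension attaining $\mu((C|D)_*)$ on $C|D$. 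Here is the key construction: set $\mu'(C|D)\eqdef\mu((C|D)_*)$ and verify directly, via Definitions \ref{def:W-coherence}, \ref{def:C-convexity}, \ref{def:condPrev}, that the resulting map is consistent. The mechanism is that any gamble $sD(I_C-\mu((C|D)_*))$ can be ``replaced'' by the corresponding gamble on $(C|D)_*$: because $(C|D)_*\lgn C|D$, on the conditioning event the indicator $I_C$ dominates (for $s\geq 0$) or is dominated by (for $s<0$) $I_{(C|D)_*}$ in the relevant sense, reducing the supremum condition to one already known to hold for $\mu$ on $\strset(\prt)$. Dually, $\mu''(C|D)\eqdef\mu((C|D)^*)$ is handled using $C|D\lgn(C|D)^*$; alternatively one obtains $\mu''$ from $\mu'$ by conjugacy together with the identity $E^*=\nega{((\nega{E})_*)}$ noted after Definition \ref{def:inner_outer}, observing that $\nega{C}|D$ has inner event $(\nega{C}|D)_*$ whose starred structure is conjugate to that of $(C|D)^*$.

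The main obstacle is the explicit consistency check for $\mu'$ and $\mu''$: one must show that replacing $C|D$ by $(C|D)_*$ (resp. $(C|D)^*$) inside an arbitrary gain $\lgain|B$ never increases (resp. decreases) its supremum conditional on $B$, across all sign patterns of the stake $s$ regarding $C|D$ and all auxiliary gambles from $\strset(\prt)$. This requires a careful case analysis on the atoms of $\prt$ comprising $(C\wedge D)_*$, $(C\wedge D)^*$, $(\nega{C}\wedge D)_*$, $(\nega{C}\wedge D)^*$ and their complements within $D$, using the explicit forms in \eqref{eq:max_cond_event}; it is essentially the same computation underlying Proposition \ref{pro:preserve_inequality}, applied atom-by-atom. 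Once that reduction is in place, the supremum condition for $\mu'$ (resp. $\mu''$) follows from that for $\mu$ on $\strset(\prt)$, since $(C|D)_*,(C|D)^*\in\strset(\prt)$. The W-coherent and C-convex cases run in parallel, the convexity constraint $\sum_{i=1}^n s_i=s_0$ being preserved under the replacement; the dF-coherent case follows since it is a special case of both. This completes the proof.
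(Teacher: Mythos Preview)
Your overall architecture matches the paper exactly: necessity via Proposition~\ref{pro:preserve_inequality}, sufficiency by verifying the two endpoint extensions directly and then invoking Lemma~\ref{lem:extension}. The detour through the natural extension is unnecessary (you realise this yourself, since you still have to exhibit the endpoint extension), but it does no harm.

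The gap is in the mechanism you sketch for the endpoint check. You propose a pointwise domination argument split by the sign of the stake $s$ on $C|D$: for $s\geq 0$ replace $sD(I_C-\mu((C|D)_*))$ by the corresponding term for $(C|D)_*$ and use that the latter is smaller; for $s<0$ reverse. Two problems. First, when you replace $D$ by $D^m=(C\wedge D)_*\vee(\nega{C}\wedge D)^*$, the global conditioning event $K=D\vee\bigvee_i B_i$ changes to $K_{aux}=D^m\vee\bigvee_i B_i$; pointwise domination of the unconditioned gain does not automatically transfer to an inequality between $\sup\lgain|K$ and $\sup\lgain_{aux}|K_{aux}$, because $K$ and $K_{aux}$ are in general incomparable. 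Second, for the lower endpoint $\mu'(C|D)=\mu((C|D)_*)$ and $s<0$ (which occurs in the dF-coherent case and when $C|D$ sits in the $X_0|B_0$ slot for W-coherence or C-convexity), the domination $g\geq g^m$ reverses to $g\leq g^m$, so you cannot bound $\sup\lgain|K$ from below by the auxiliary supremum that you know is $\geq 0$.

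The paper avoids both issues by a different device: it shows that the \emph{set of values} of $G_{aux}|K_{aux}$ is contained in the set of values of $G|K$. Concretely, for every atom $e\in\prt$ with $e\Rightarrow K_{aux}$, it locates an atom $\omega$ of the refined partition $\prt\wedge\{C\wedge D,\nega{C}\wedge D,\nega{D}\}$ with $\omega\Rightarrow K$ and $G(\omega)=G_{aux}(e)$. This is done by a short case split (according to whether $e\Rightarrow D^m$ or $e\Rightarrow\bigvee_i B_i\wedge\nega{D^m}$, and in the former case whether $e\Rightarrow(C\wedge D)_*$ or $e\Rightarrow(\nega{C}\wedge D)^*$). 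Since value-set inclusion gives $\max G_{aux}|K_{aux}\leq\max G|K$ regardless of the sign of $s$, and $\max G_{aux}|K_{aux}\geq 0$ holds because $G_{aux}$ involves only events of $\strset(\prt)$, you get $\max G|K\geq 0$ uniformly for the dF-, W- and C-convex cases. Your acknowledged ``careful case analysis on the atoms'' is exactly this, but the target is value-set inclusion, not pointwise domination.
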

\begin{proof}
The \emph{only if} part follows at once from Proposition \ref{pro:preserve_inequality}:
$(C|D)_{*}\lgn C|D\lgn (C|D)^{*}$ implies
$\mu((C|D)_{*})\leq\mu(C|D)\leq\mu((C|D)^{*})$.

The proof of the \emph{if} implication consists of two parts:
\begin{itemize}
\item[a)] Prove that both $\mu(C|D)=\mu((C|D)_*)$ and $\mu(C|D)=\mu((C|D)^*)$ are consistent extensions of $\mu$.
\item[b)] Apply Lemma \ref{lem:extension}.
\end{itemize}
To prove a), define $\mu(C|D)=\mu((C|D)_*)$ and let $G|K$ be a generic gain
concerning $\mu$ on $\strset(\prt)\cup\{C|D\}$.
We shall check that the maximum of $G|K$ is non-negative.

The gain $G|K$ involves bets regarding $C|D$ and (no or) finitely many events of $\strset(\prt)$,
call them $A_1|B_1,\ldots,A_n|B_n$ for notational simplicity.
Here $n\geq 0$,
whilst $C|D$ is necessarily included into the bets.
If not, we are left with a bet on $\strset(\prt)$ only.
This would immediately imply $\max G|K\geq 0$,
since $\mu$ is already known to be consistent on $\strset(\prt)$.

Defining
$G_0=\sum_{i=1}^{n}s_i B_i(A_i-\mu(A_i|B_i))$,
we may write
\begin{equation*}
G=sD(C-\mu((C|D)_*))+G_0,\ K=D\vee\bigvee_{i=1}^{n}B_i.
\end{equation*}
The term $G_0$ gathers the addends in $G$ referring to bets regarding events in $\strset(\prt)$.

We shall consider for each $G|K$ an auxiliary gain $G_{aux}|K_{aux}$,
such that
\begin{itemize}
\item[i)] $\max G_{aux}|K_{aux}\geq 0$.
\item[ii)] The set of values that $G_{aux}|K_{aux}$ may take is a subset of the possible values of $G|K$.
\end{itemize}
Clearly, i) and ii) jointly imply $\max G|K\geq 0$, hence the first part of a).

The auxiliary gain $G_{aux}|K_{aux}$ is defined as follows:
\begin{equation*}
G_{aux}=sD^m(C^m-\mu((C|D)_*))+G_{0},\ K_{aux}=D^{m}\vee\bigvee_{i=1}^{n}B_i
\end{equation*}
where
\begin{equation}
\begin{split}
\label{eq:cmdm}
C^{m}&=(C\wedge D)_*,\\
D^{m}&=(C\wedge D)_{*}\vee (\nega{C}\wedge D)^{*}.
\end{split}
\end{equation}
In equation \eqref{eq:cmdm},
we simply redefine the events forming $(C|D)_{*}$.
Therefore, $C^{m}|D^{m}=(C|D)_{*}$.

Note that $G$ and hence $G_{aux}$ may be gains for a dF-coherent, W-coherent or C-convex upper or lower $\mu$
by putting suitable additional constraints on $s,s_1,\ldots,s_n$.
Our proof is independent of which constraints are possibly added,
hence holds in all the above assumptions for $\mu$.

\emph{Proof of i)}.
The inequality
$\max G_{aux}|K_{aux}\geq 0$ follows from the assumed consistency (dF-coherence, etc.) of $\mu$ on $\strset(\prt)$.
In fact, all the events in $G_{aux}|K_{aux}$ belong to $\strset(\prt)$.

\emph{Proof of ii)}.
We preliminarily note that
$G_{aux}|K_{aux}$ is defined on the elements of $\prt$ implying $K_{aux}$,
whilst an appropriate partition to evaluate $G|K$ is the product partition $\prt^\prime=\prt\wedge\{C\wedge D, \nega{C}\wedge D, \nega{D}\}$.
Clearly, $\prt^\prime$ is finer than $\prt$.

We prove that $\forall e \in\prt$ such that $e \Rightarrow K_{aux}$
there exists $\omega\in\prt^{\prime}$ such that $\omega\Rightarrow K$
and $G(\omega)=G_{aux}(e)$,
i.e. $G|K(\omega)=G_{aux}|K_{aux}(e)$.

It is useful for this to observe that any atom $e$ of $\prt$ can be written
as the logical sum of at most three non-impossible atoms of $\prt^\prime$:
\begin{equation}
\label{eq:atomsplit}
e=(e\wedge C\wedge D)\vee(e\wedge\nega{C}\wedge D)\vee(e\wedge \nega{D}).
\end{equation}
Writing $K_{aux}$ as the sum of two disjoint events,
$K_{aux}=D^{m}\vee(\bigvee_{i=1}^{n}B_i\wedge\nega{D^{m}})$,
we correspondingly consider two cases:
\begin{itemize}
\item[a1)] Let $e\in\prt$ be such that $e\Rightarrow\bigvee_{i=1}^{n}B_i\wedge\nega{D^{m}}\ (\Rightarrow\nega{D^{m}})$.

Then $e\wedge D^{m}=\varnothing$ and,
therefore, also $e\wedge\nega{C}\wedge D=\varnothing$.
Hence, \eqref{eq:atomsplit} implies $e=(e\wedge C\wedge D)\vee (e\wedge\nega{D})$.
Note that $\omega=e\wedge\nega{D}$ is non-impossible,
because $\omega=\varnothing$ would imply
$e=e\wedge C\wedge D\Rightarrow (C\wedge D)_{*}\Rightarrow D^{m}$,
which is a contradiction to the assumption.
Furthermore,
$\omega\Rightarrow e\Rightarrow\bigvee_{i=1}^{n} B_{i}\Rightarrow K$.
It is now immediate to check that $G_{aux}(e)=G(\omega)(=G_{0}(e))$ and,
therefore, $G_{aux}|K_{aux}(e)=G|K(\omega)$.
\item[a2)] Let $e\in\prt$ be such that $e\Rightarrow D^{m}=(C\wedge D)_{*}\vee (\nega{C}\wedge D)^{*}$.

If $e\Rightarrow (C\wedge D)_*=C^{m}$,
then $e\Rightarrow C\wedge D$.
Hence, $e=e\wedge C\wedge D$ in \eqref{eq:atomsplit}.
Consequently, $e$ is also an element of $\prt^\prime$ and $e\Rightarrow K$.
This means that also $G|K$ is defined at $e$ and,
since $G_{aux}(e)=G(e)=s(1-\mu((C|D)_*))+G_{0}(e)$,
we have that $G_{aux}|K_{aux}(e)=G|K(e)$.

If $e\Rightarrow(\nega{C}\wedge D)^*$,
then at least the component $\omega^\prime=e\wedge\nega{C}\wedge D$ of $e$ in equation \eqref{eq:atomsplit}
is non-impossible and implies $K$.
Then $G_{aux}(e)=G(\omega^\prime)=-s\mu((C|D)_*)+G_{0}(e)$.
Therefore, $G_{aux}|K_{aux}(e)=G|K(\omega^\prime)$.
\end{itemize}
Thus we have proven ii).

The proof of the second part of a) is quite analogous.
Putting now $\mu(C|D)=\mu((C|D)^*)$,
we consider a generic gain
$G^\prime|K$ concerning $\mu$ on  $\strset\cup\{C|D\}$,
with $G^\prime=sD(C-\mu((C|D)^*))+G_0$.
We define further an auxiliary gain $G^{aux}|K^{aux}$,
with $G^{aux}=sD^{M}(C^{M}-\mu((C|D)^*))+G_1$,
$K^{aux}=D^{M}\vee\bigvee_{i=1}^{n}B_i$,
where $C^{M}=(C\wedge D)^{*}$,
$D^{M}=(C\wedge D)^{*}\vee (\nega{C}\wedge D)_{*}$.
Then, the analogues of i), ii) can be proven.
For ii), two major alternatives are to be considered,
as suggested by the decomposition $K^{aux}=D^{M}\vee(\bigvee_{i=1}^{n}B_i\wedge\nega{D^{M}})$.
We omit the details.

Finally, the thesis of the Proposition follows by b).
\end{proof}

In the general case of extensions on an \emph{arbitrary} set of conditional events,
the following proposition holds:
\begin{proposition}
\label{pro:ext_prob_many}
Let $\lpr$ ($\upr$) be a W-coherent, respectively C-convex lower (upper) probability defined on $\strset$.
Let also $\seteve$ be an arbitrary set of conditional events.
Then, the extension of $\lpr$ ($\upr$) on $\strset\cup\seteve$,
such that $\lpr(C|D)=\lpr((C|D)_*)$ ($\upr(C|D)=\upr((C|D)^*)$), $\forall C|D\in\seteve$,
is a W-coherent, respectively C-convex lower (upper) probability.
\end{proposition}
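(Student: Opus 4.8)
The plan is to reduce the statement to the single–event result of Proposition \ref{pro:ext_prob}, by exploiting the finitary character of W–coherence (C–convexity) together with the fact, recalled in Section \ref{subsec:imprecise_previsions}, that the natural (resp.\ C–convex natural) extension is the \emph{least} W–coherent (resp.\ C–convex) extension. I describe the argument for the lower–probability case; the upper–probability case follows by conjugacy, since $(\nega C|D)_*=\nega{((C|D)^*)}$ by Proposition \ref{pro:set_asterisks}, whence $\upr(C|D)=1-\lpr(\nega C|D)=1-\lpr((\nega C|D)_*)=\upr((C|D)^*)$, and W–coherence (C–convexity) is preserved under conjugacy (here $\strset$ is closed under $A|B\mapsto\nega A|B$).

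Call $\mu^*$ the extension described in the statement. Since every instance of Definition \ref{def:W-coherence} (Definition \ref{def:C-convexity}) involves only finitely many conditional events, $\mu^*$ is W–coherent (C–convex) on $\strset\cup\seteve$ provided its restriction to $\strset\cup\seteve_0$ is W–coherent (C–convex) for every \emph{finite} $\seteve_0=\{C_1|D_1,\dots,C_k|D_k\}\subseteq\seteve$. Fix such an $\seteve_0$ and let $\lnex_0$ be the natural extension of $\lpr$ from $\strset$ to $\strset\cup\seteve_0$; it exists, is W–coherent, and coincides with $\lpr$ on $\strset$. I claim $\lnex_0=\mu^*$ on $\strset\cup\seteve_0$, which — as $\lnex_0$ is W–coherent — settles the finite case, hence the whole proposition. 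Only the values $\lnex_0(C_j|D_j)$ need checking. On one hand $(C_j|D_j)_*\in\strset$ and $(C_j|D_j)_*\lgn C_j|D_j$ (Proposition \ref{pro:set_asterisks}), so Proposition \ref{pro:preserve_inequality} applied to the W–coherent $\lnex_0$ gives $\lpr((C_j|D_j)_*)=\lnex_0((C_j|D_j)_*)\le\lnex_0(C_j|D_j)$. On the other hand, by the ``if'' part of Proposition \ref{pro:ext_prob} (taking the lower endpoint) the extension $\mu_j$ of $\lpr$ to $\strset\cup\{C_j|D_j\}$ with $\mu_j(C_j|D_j)=\lpr((C_j|D_j)_*)$ is W–coherent; extending $\mu_j$ W–coherently to $\strset\cup\seteve_0$ — e.g.\ by its own natural extension, which leaves $\mu_j$ untouched on $\strset\cup\{C_j|D_j\}$ — yields a W–coherent extension $\nu_j$ of $\lpr$ with $\nu_j(C_j|D_j)=\lpr((C_j|D_j)_*)$. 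As $\lnex_0$ is dominated by every W–coherent extension of $\lpr$, $\lnex_0(C_j|D_j)\le\nu_j(C_j|D_j)=\lpr((C_j|D_j)_*)$. Combining, $\lnex_0(C_j|D_j)=\lpr((C_j|D_j)_*)$ for all $j$, i.e.\ $\lnex_0=\mu^*$ on $\strset\cup\seteve_0$. The C–convex case is identical word for word, replacing ``W–coherent'' by ``C–convex'' and ``natural extension'' by ``C–convex natural extension'', whose required properties (existence, minimality, agreement on the original domain) are exactly the ones used here.

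The step I expect to be the main obstacle is the upper bound $\lnex_0(C_j|D_j)\le\lpr((C_j|D_j)_*)$. One is tempted to mimic the proof of Proposition \ref{pro:ext_prob} directly, replacing every $C_j|D_j$ occurring in a gain by $(C_j|D_j)_*$ and matching atoms of the refined product partition $\prt\wedge\bigwedge_j\{C_j\wedge D_j,\, \nega{C_j}\wedge D_j,\, \nega{D_j}\}$; but with several additional events this can break down, because there need not be a single refining atom realising the correct outcome of every bet on the $C_j|D_j$ \emph{and} activating the conditioning event of the gain. Routing the argument through the natural extension sidesteps this by delegating all the combinatorics to the already proved single–event bound of Proposition \ref{pro:ext_prob}, at the sole cost of invoking the existence and minimality of the natural (and C–convex natural) extension recalled in Section \ref{subsec:imprecise_previsions}.
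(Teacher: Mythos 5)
Your proof is correct, but it follows a genuinely different route from the paper's. The paper verifies the W-coherence/C-convexity condition directly at the level of gains: for a generic gain $\lgain$ involving finitely many events of $\seteve$ (with at most one possibly negative stake), it replaces each term $s_h D_h(C_h-\lpr((C_h|D_h)_*))$ by the auxiliary term built on $(C_h|D_h)_*=C_h^m|D_h^m$, proves the pointwise dominance $\lgain\geq\lgain_{aux}$ on a product partition, and then handles the mismatch between the conditioning events $K$ and $K_{aux}$ by a two-case analysis (splitting $K_{aux}$ into $K_{aux}^1\vee K_{aux}^2$ and, in the second case, deducing $s_h\lpr((C_h|D_h)_*)=0$), finally invoking Proposition \ref{pro:ext_prob} so that only one non-$\strset$ event remains in the auxiliary gain. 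You instead exploit the finitary character of Definitions \ref{def:W-coherence} and \ref{def:C-convexity} to reduce to finite $\seteve_0$, and then sandwich the natural (convex natural) extension $\lnex_0$: the lower bound $\lnex_0(C_j|D_j)\geq\lpr((C_j|D_j)_*)$ from Proposition \ref{pro:preserve_inequality}, the upper bound from Proposition \ref{pro:ext_prob} plus the existence of a further W-coherent (C-convex) extension and the minimality of $\lnex_0$, concluding that the GN-extension coincides with $\lnex_0$ and is therefore consistent. This is legitimate and not circular, since existence, uniqueness, minimality and domain-agreement of the natural and convex natural extensions are stated as background in Section \ref{subsec:imprecise_previsions} (with references), not derived from Proposition \ref{pro:ext_prob_many}; in effect you obtain Proposition \ref{pro:GN-extensions} simultaneously, whereas the paper derives it afterwards as an easy corollary. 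What each approach buys: yours is shorter and sidesteps the combinatorial matching of atoms (your closing remark about why a naive multi-event imitation of the proof of Proposition \ref{pro:ext_prob} can fail is well taken — the paper avoids that pitfall by gain dominance rather than atom matching), at the cost of leaning on the nontrivial external machinery of (convex) natural extensions in the conditional setting; the paper's argument is self-contained at the level of gains, using only Propositions \ref{pro:preserve_inequality} and \ref{pro:ext_prob} and the definitions, and makes explicit where the restriction to a single possibly negative stake matters. Two cosmetic points: the identity $(\nega{C}|D)_*=\nega{((C|D)^*)}$ you use for the conjugacy step is not literally part of Proposition \ref{pro:set_asterisks}, though it follows readily from \eqref{eq:max_cond_event}; and, like the paper, you tacitly read C-convexity of the extension as the convexity gain condition plus centering on the original domain, since $0|D$ for $D\notin\genfield$ need not belong to $\strset\cup\seteve$.
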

\begin{proof}
We prove the part of the thesis concerning $\lpr$,
the one regarding $\upr$ being quite analogous.

\begin{itemize}
\item[a)] We start with a preliminary fact:
let $C|D\in\seteve$, $s\geq 0$,
and define
\begin{equation}
g=sD(C-\lpr(C^m|D^m)),\
g^m=sD^m(C^m-\lpr(C^m|D^m)),
\end{equation}
where $C^{m}$ and $D^{m}$ are defined in \eqref{eq:cmdm}.

Then $g\geq g^m$.

To prove this,
note firstly that both $g$ and $g^m$ are defined on the product partition
$\{C\wedge D, \nega{C}\wedge D, \nega{D}\}\wedge\{C^m\wedge D^m, \nega{{C^m}}\wedge D^m, \nega{{D^m}}\}$.
Three elements of this partition are impossible,
namely $e_1=C^m\wedge D^m\wedge\nega{C}\wedge D$, $e_2=C^m\wedge D^m\wedge\nega{D}$, $e_3=\nega{{D^m}}\wedge\nega{C}\wedge D$.
In fact,
$C^m\wedge D^m=C^m=(C\wedge D)_{*}\Rightarrow C\wedge D$ by the definition of $C^m$,
hence $e_1=e_2=\varnothing$.
Also, $e_3=\varnothing$: from the definition of $D^m$, $\nega{C}\wedge D\Rightarrow D^{m}$,
that is $\nega{C}\wedge D \wedge \nega{D^{m}}=\varnothing$.
The inequality $g\geq g^m$ follows  
by comparing the values of $g$ and $g^m$ at the remaining $6$ atoms.
For instance,
at $e_{4}=\nega{{C^m}}\wedge D^m\wedge C\wedge D$,
$g(e_{4})=s(1-\lpr(C^m|D^m))>-s\lpr(C^m|D^m)=g^m(e_{4})$.
The other cases are similar.
\item[b)] Now take a generic gain $\lgain$ in the definition of W-coherence or C-convexity of the extension of $\lpr$ on $\strset\cup\seteve$,
\begin{equation}
\label{eq:gain_main}
\lgain=g_0+\sum_{h=1}^{r}g_h+\sum_{k=1}^{n}t_k B_k(A_k-\lpr(A_k|B_k)).
\end{equation}
Here $g_h=s_h D_h(C_h-\lpr((C_h|D_h)_{*}))$ and $C_h|D_h\in\seteve$ $(h=0,\ldots,r)$,
whilst $A_k|B_k\in\strset$ $(k=1,\ldots,n)$.

Without introducing any real restriction,
we may suppose that the only stake that can (with W-coherence) or must (with C-convexity) be \emph{negative} is either $s_0$ or one among $t_1,t_2,\ldots,t_n$.
The remaining stakes are necessarily non-negative.

Next to this,
we compare any $\lgain$ with its `auxiliary' gain
\begin{equation}
\label{eq:gain_aux}
\lgain_{aux}=g_0+\sum_{h=1}^{r}g_h^m+\sum_{k=1}^{n}t_k B_k(A_k-\lpr(A_k|B_k)),
\end{equation}
where $g_h^m=s_h D_h^m(C_h^m-\lpr(C_h^m|D_h^m))$.
The events $C_h^m$, $D_h^m$ are derived from $C_h$, $D_h$ $(h=1,\ldots,r)$,
exactly like $C^m$, $D^m$ from $C$, $D$ in \eqref{eq:cmdm}.
Hence, $C_{h}^{m}|D_{h}^{m}=(C_{h}|D_{h})_{*}$.

$\lgain$ and $\lgain_{aux}$ are defined on the product partition
$\prt_{aux}=\prt\wedge\bigwedge_{h=o}^{r}\{C_h\wedge D_h,\nega{{C_h}}\wedge D_h,\nega{{D_h}}\}$,
and we easily get, by a),
\begin{equation}
\label{eq:non_aux_dominates_aux_gain}
\lgain\geq\lgain_{aux}.
\end{equation}
\item[c)]
Define
$K=D_0\vee\bigvee_{h=1}^{r}D_h\vee\bigvee_{k=1}^{n}B_k$,
$K_{aux}=D_0\vee\bigvee_{h=1}^{r}D_h^m\vee\bigvee_{k=1}^{n}B_k$.
We are now going to prove that $\max\lgain|K\geq0$.

Note that $\max\lgain_{aux}|K_{aux}\geq0$ by Proposition \ref{pro:ext_prob}
and Definition \ref{def:W-coherence} (W-coherence) or Definition \ref{def:C-convexity} (C-convexity).
In fact, all events in $\lgain_{aux}$ except $C_0|D_0$ belong to $\strset(\prt)$.

We define further the events
$K_{aux}^{1}=D_0\vee\bigvee_{h=1}^{r}(D_h^m\wedge D_h)\vee\bigvee_{k=1}^{n}B_k$,
$K_{aux}^{2}=\bigvee_{h=1}^{r}(D_h^m\wedge \nega{(D_h)})\wedge \nega{(K_{aux}^{1})}$.
They are clearly disjoint and such that $K_{aux}=K_{aux}^{1}\vee K_{aux}^{2}$.

Recall that $\max\lgain_{aux}|K_{aux}=\max\{\max\lgain_{aux}|K_{aux}^{1},\max\lgain_{aux}|K_{aux}^{2}\}$$\geq 0$.
From this, we distinguish two cases to evaluate
$\max\lgain |K$. 
\begin{itemize}
\item[c1)]
$\max\lgain_{aux}|K_{aux}=\max\lgain_{aux}|K_{aux}^1$.

Since $K_{aux}^{1}\Rightarrow K$
and using \eqref{eq:non_aux_dominates_aux_gain},
we get
$\max\lgain|K\geq\max\lgain|K_{aux}^{1}\geq\max\lgain_{aux}|K_{aux}^{1}=\max\lgain_{aux}|K_{aux}\geq 0$.
\item[c2)]
$\max\lgain_{aux}|K_{aux}=\max\lgain_{aux}|K_{aux}^2$.

Since $C_h^m=(C_{h}\wedge D_{h})_{*}\Rightarrow C_{h}\wedge D_h\Rightarrow D_h$ and $C_h^m\Rightarrow D_h^m$,
we easily get $C_h^m\Rightarrow D_h^m\wedge D_h\Rightarrow K_{aux}^{1}$.
Hence, $C_h^m\wedge K_{aux}^{2}=\varnothing$,
because $K_{aux}^{1}\wedge K_{aux}^{2}=\varnothing$.
It follows $\lgain_{aux}|K_{aux}^{2}=-\sum_{h=1}^{r}s_{h}\lpr(C_h^m|D_h^m)\leq 0$,
since $s_{h}\geq 0\ (h=1,\ldots,r)$.
Hence,
necessarily $\max\lgain_{aux}|K_{aux}=\max\lgain_{aux}|K_{aux}^{2}=0$,
meaning that
\begin{equation}
\label{eq:coeff_null}
s_h\lpr(C_h^m|D_h^m)=s_h\lpr((C_h|D_h)_*)=0\ \forall h=1,\ldots,r.
\end{equation} 
Define $\lgain_{aux}^{1}=g_0+\sum_{k=1}^{n}t_k B_k(A_k-\lpr(A_k|B_k))$,
$K_{aux}^{3}=D_{0}\vee\bigvee_{k=1}^{n} B_k$.
Using \eqref{eq:gain_main} and \eqref{eq:coeff_null} at the following equality,
we obtain $\lgain|K_{aux}^{3}=(\lgain_{aux}^{1}+\sum_{h=1}^{r}s_h D_h C_h)|K_{aux}^{3}\geq\lgain_{aux}^{1}|K_{aux}^{3}$.
Since $K_{aux}^{3}\Rightarrow K$,
we get
$\max\lgain|K\geq\max\lgain|K_{aux}^{3}\geq\max\lgain_{aux}^{1}|K_{aux}^{3}\geq 0$.
The last inequality holds by Proposition \ref{pro:ext_prob},
since all lower probabilities in $\lgain_{aux}^{1}|K_{aux}^{3}$ are defined in $\strset$,
except one ($\lpr(C_{0}|D_{0})=\lpr((C_{0}|D_{0})_{*})$ in $g_{0}$).
\end{itemize}
In both cases, $\max\lgain|K\geq 0$. The thesis follows.
\end{itemize}
\end{proof}
In Propositions \ref{pro:ext_prob} and  \ref{pro:ext_prob_many},
two kinds of extensions are introduced by means of the GN relation:
the \emph{lower GN-extension} $\mu(C|D)=\mu((C|D)_*)$ and
the \emph{upper GN-extension} $\mu(C|D)=\mu((C|D)^*)$, $\forall C|D\in\seteve$.
They correspond to important special extensions mentioned in Section \ref{subsec:imprecise_previsions}.
In fact
\begin{proposition}
\label{pro:GN-extensions}
Let $\lpr$ ($\upr$) be a W-coherent,
alternatively C-convex lower (upper) probability on $\strset$. Then
its lower GN-extension (its upper GN-extension)
$\lpr(C|D)=\lpr((C|D)_*)$ ($\upr(C|D)=\upr((C|D)^*)$), $\forall
C|D\in\seteve$ is the \emph{natural extension}, alternatively the
\emph{convex natural extension}, of $\lpr$ (of $\upr$) on
$\strset\cup\seteve$.
\end{proposition}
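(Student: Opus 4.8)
The plan is to exploit the least-committal characterisation of the natural (resp.\ convex natural) extension: for lower previsions, the natural extension $\lnex$ is dominated by every W-coherent extension, and the convex natural extension is dominated by every C-convex extension. Thus it suffices to prove two things about the lower GN-extension $\lpr^\sharp$ defined by $\lpr^\sharp(C|D)=\lpr((C|D)_*)$ for all $C|D\in\seteve$ (and $\lpr^\sharp=\lpr$ on $\strset$): first, that $\lpr^\sharp$ is itself a W-coherent (resp.\ C-convex) lower probability on $\strset\cup\seteve$; second, that $\lpr^\sharp$ is pointwise smallest among all such extensions. The first point is exactly Proposition \ref{pro:ext_prob_many}, which has already been established, so nothing remains to be done there. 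The argument for $\upr$ is symmetric, using conjugacy $\upr(A|B)=1-\lpr(\nega A|B)$ together with the identity $E^*=\nega{((\nega E)_*)}$ from Definition \ref{def:inner_outer}, which interchanges the roles of inner and outer events; I would state this reduction and then treat only the lower case in detail.

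For the minimality, let $\underline{Q}$ be any W-coherent (resp.\ C-convex) extension of $\lpr$ to $\strset\cup\seteve$ and fix $C|D\in\seteve$. By Proposition \ref{pro:set_asterisks}, the inner event $(C|D)_*$ of Definition \ref{def:inner_outer} belongs to $\strset(\prt)$ and satisfies $(C|D)_*\lgn C|D$. Applying Proposition \ref{pro:preserve_inequality} to the C-convex lower probability $\underline{Q}$ (every W-coherent lower probability is C-convex by Remark \ref{rem:c-convexity}), we get $\underline{Q}((C|D)_*)\le\underline{Q}(C|D)$. But $(C|D)_*\in\strset$, so $\underline{Q}((C|D)_*)=\lpr((C|D)_*)=\lpr^\sharp(C|D)$, whence $\lpr^\sharp(C|D)\le\underline{Q}(C|D)$. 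Since $\lpr^\sharp$ and $\underline{Q}$ agree (with $\lpr$) on $\strset$, this shows $\lpr^\sharp\le\underline{Q}$ everywhere on $\strset\cup\seteve$. Combined with the fact that $\lpr^\sharp$ is a legitimate extension of the required type, this is precisely the defining property of the natural (resp.\ convex natural) extension, and uniqueness of that object (recalled in Section \ref{subsec:imprecise_previsions}) identifies $\lpr^\sharp$ with it.

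I do not expect a genuine obstacle here: the proof is essentially a bookkeeping exercise that plugs the already-proved Propositions \ref{pro:preserve_inequality}, \ref{pro:set_asterisks} and \ref{pro:ext_prob_many} into the abstract definition of natural extension. The only point requiring a little care is making sure the monotonicity step is invoked in the correct form — Proposition \ref{pro:preserve_inequality} must be applied to the \emph{extension} $\underline{Q}$, not to $\lpr$, which is why it matters that the proposition holds for arbitrary C-convex lower probabilities rather than only for the specific $\lpr$; and in the upper case one must verify that passing through conjugacy really does send $(C|D)_*$ to $(C|D)^*$, which follows from the componentwise formulas in \eqref{eq:max_cond_event} together with $E^*=\nega{((\nega E)_*)}$.
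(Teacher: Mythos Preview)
Your proposal is correct and follows essentially the same approach as the paper: show minimality by applying Proposition~\ref{pro:preserve_inequality} to an arbitrary coherent extension $\underline{Q}$ (so that $\underline{Q}(C|D)\ge\underline{Q}((C|D)_*)=\lpr((C|D)_*)$), and rely on Proposition~\ref{pro:ext_prob_many} for the fact that the lower GN-extension is itself a coherent extension. The paper's proof is terser---it leaves the appeal to Proposition~\ref{pro:ext_prob_many} implicit and dismisses the upper case with a single remark---but the logical structure is identical, and your explicit check that conjugacy carries $(\nega{C}|D)_*$ to the complement of $(C|D)^*$ within the common conditioning event is a welcome clarification.
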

\begin{proof}
It is enough to consider the lower probability case.
Let $\underline{Q}$ be any extension of $\lpr$ on  $\strset\cup\seteve$
which is W-coherent or C-convex if $\lpr$ is so.
Then $\underline{Q}(C|D)\geq\underline{Q}((C|D)_*)=\lpr((C|D)_*)=\lpr(C|D)$,
the inequality being ensured by Proposition  \ref{pro:preserve_inequality},
the first equality because $\underline{Q}=\lpr$ in $\strset$.
Hence the lower GN-extension is the \emph{least-committal} W-coherent or C-convex
extension of $\lpr$,
a property which identifies the natural or convex natural extension.
\end{proof}

The result of Proposition \ref{pro:GN-extensions} is important,
as it displays a simple way to find the natural extension (and the convex natural extension),
without performing any computation.
For any additional event $C|D$,
we only have to find $(C|D)_*$ or $(C|D)^*$.
This procedure clearly depends on the availability of an initial evaluation on the set of conditional events $\strset$.
This is a special, although not uncommon, case of probability assessment:
it is often customary to evaluate all the events we can obtain from a given partition or universe $\prt$,
i.e. the events of $\strset(\prt)$ in the conditional case.
\begin{example}
Before the final phase of the $20XY$ Football World Cup,
a Swedish bookie elicits a W-coherent upper probability $\upr$ on $\strset(\prt)$ as a basis to fix her odds.
Here $\prt=\{B, S, T\}$,
with $B=\text{`Brazil wins the Cup'}$,
$S=\text{`Sweden wins the Cup'}$,
$T=\text{`A third team wins the Cup'}$.

Later on during the final phase,
the bookie gets to know that event
$F=$`Brazil is qualified for the final game' (and nothing else) is true.
Conditional on $F$,
which is now the largest upper probability for $S$ the bookie can afford,
being consistent with her previous assessment on $\strset(\prt)$?

Since $S|F\notin\strset(\prt)$ ($F$ is not logically dependent on $\prt$),
the problem is that of finding the natural extension of $S|F$,
i.e. $\upr((S|F)^*)$ by Proposition~\ref{pro:GN-extensions}.
Since $(S\wedge F)^*=S$ and $(\nega{S}\wedge F)_*=((B\vee T)\wedge F)_*=B$,
it ensues that $\upr((S|F)^*)=\upr(S|S\vee B)$.

As for the lowest consistent upper probability for $S|F$,
it is $\upr(S|F)=0$,
because $(S|F)_*=\varnothing|B\vee T$ (Proposition \ref{pro:ext_prob}). 
\end{example}

In general, with an assessment $\mu$ on a generic set $\setran$ of
conditional events, we should first extend $\mu$ to some
$\strset(\prt)\supset\setran$ before applying Proposition
\ref{pro:ext_prob_many}.
The GN relation would therefore be of little help,
\emph{operationally}, while remaining
\emph{theoretically} meaningful.
In fact, it still contributes to explain how logical
constraints may determine our inferences.

GN-extensions play a fundamental role also in detecting the other relevant type of extensions recalled in
Section \ref{subsec:imprecise_previsions},
that is \emph{upper extensions}:
\begin{proposition}
\label{pro:upper-extension}
Given $\lpr$ W-coherent or C-convex on $\strset$,
its extension $\lpr(C|D)=\lpr((C|D)^*)$ on $\seteve=\{C|D\}$ is \emph{the} upper extension of $\lpr$.
\end{proposition}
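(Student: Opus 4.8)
The plan is to derive everything from Proposition \ref{pro:ext_prob}. First I would record the exact shape of the problem: by Proposition \ref{pro:ext_prob}, the W-coherent (respectively, C-convex) extensions of $\lpr$ to $\strset\cup\{C|D\}$ are precisely the maps $\underline{Q}$ that agree with $\lpr$ on $\strset$ and satisfy $\underline{Q}(C|D)\in[\lpr((C|D)_*),\lpr((C|D)^*)]$; moreover each such $\underline{Q}$ is indeed W-coherent / C-convex. In particular the map $\underline{U}$ with $\underline{U}=\lpr$ on $\strset$ and $\underline{U}(C|D)=\lpr((C|D)^*)$ is one of these admissible extensions, so it is a legitimate candidate.

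Next I would isolate the one observation that does all the work: since every admissible extension coincides with $\lpr$ on $\strset$, two admissible extensions $\underline{Q}_1$ and $\underline{Q}_2$ are comparable pointwise on $\strset\cup\{C|D\}$ exactly according to their values at $C|D$, i.e. $\underline{Q}_1\leq\underline{Q}_2$ iff $\underline{Q}_1(C|D)\leq\underline{Q}_2(C|D)$. Because $\underline{U}(C|D)=\lpr((C|D)^*)$ is the right endpoint of the admissible interval, this gives $\underline{Q}\leq\underline{U}$ for every W-coherent / C-convex extension $\underline{Q}$ of $\lpr$ to $\strset\cup\{C|D\}$; that is, $\underline{U}$ is in fact the \emph{greatest} admissible extension. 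A fortiori no admissible $\underline{Q}$ can dominate $\underline{U}$ while differing from it at some conditional event, which is the defining requirement for an upper extension; hence $\underline{U}$ is an upper extension of $\lpr$.

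It then remains to justify the definite article, i.e. uniqueness. If $\underline{U}'$ is any upper extension of $\lpr$ on $\{C|D\}$, it is admissible, so $\underline{U}'=\lpr=\underline{U}$ on $\strset$ and $\underline{U}'(C|D)\leq\lpr((C|D)^*)=\underline{U}(C|D)$. Should this last inequality be strict, then $\underline{U}$ would be an admissible extension dominating $\underline{U}'$ and strictly exceeding it at $C|D$, contradicting that $\underline{U}'$ is an upper extension. Therefore $\underline{U}'(C|D)=\lpr((C|D)^*)$, i.e. $\underline{U}'=\underline{U}$.

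I do not expect a genuine obstacle: once Proposition \ref{pro:ext_prob} is available, the proof is just the elementary remark that on a one-event enlargement the admissible extensions form a chain parametrised by the value assigned to $C|D$, so there is a greatest one and it coincides with the upper extension. The only point needing a touch of care is to read the definition of upper extension from Section \ref{subsec:imprecise_previsions} as ``there is no admissible $\underline{Q}$ with $\underline{Q}\geq\underline{U}$ and $\underline{Q}\neq\underline{U}$'' (note that, because all admissible extensions already agree on $\strset$, a weaker ``strictly everywhere'' reading would make every extension trivially undominated and render the stated uniqueness false); with that reading the domination $\underline{Q}\leq\underline{U}$ established above yields both existence and uniqueness at once.
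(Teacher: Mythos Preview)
Your proposal is correct and follows essentially the same route as the paper: both use Proposition~\ref{pro:ext_prob} to see that the choice $\lpr(C|D)=\lpr((C|D)^*)$ is admissible, and then argue that every admissible extension $\underline{Q}$ satisfies $\underline{Q}(C|D)\leq\lpr((C|D)^*)$. The only cosmetic difference is that the paper obtains this last bound by invoking Proposition~\ref{pro:preserve_inequality} directly (from $C|D\lgn(C|D)^*$), whereas you read it off the interval characterisation in Proposition~\ref{pro:ext_prob}; since the ``only if'' part of Proposition~\ref{pro:ext_prob} is itself proved via Proposition~\ref{pro:preserve_inequality}, the two arguments are equivalent. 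Your explicit uniqueness paragraph is a welcome addition that the paper leaves implicit.
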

\begin{proof}
By Proposition \ref{pro:ext_prob}
the extension $\lpr(C|D)=\lpr((C|D)^*)$  is W-coherent or C-convex if the starting $\lpr$ is,
and by Proposition \ref{pro:preserve_inequality} any W-coherent or C-convex extension $\underline{Q}$ must satisfy $\underline{Q}(C|D)\leq\underline{Q}((C|D)^*)=\lpr((C|D)^*)$.
\end{proof}

Outside the special case of an extension to a single additional event,
fixed by Proposition \ref{pro:upper-extension},
it is not so simple to determine the upper extension.
Moreover, it is generally no longer unique.

It is also interesting to discuss briefly the effect of Propositions \ref{pro:ext_prob} and  \ref{pro:ext_prob_many} in the special case that the uncertainty measure given on $\strset$ is a dF-coherent probability $P(\cdot|\cdot)$.
Since $P$ is then both a lower and an upper W-coherent probability, we obtain easily
\begin{proposition}
\label{pro:dF-coherent_extensions}
Let $P$ be a dF-coherent probability on $\strset$.
\begin{itemize}
\item[a)] Its dF-coherent extensions to an additional event $C|D$ are those and only those $P(C|D)$ in the closed interval $[P((C|D)_*), P((C|D)^*)]$.
\item[b)] Its extension $\lpr$ ($\upr$) on an arbitrary set of conditional events $\seteve$,
given by the lower  GN-extension $\lpr(C|D)=P((C|D)_*)$
(by the upper GN-extension $\upr(C|D)=P((C|D)^*)$) , $\forall C|D\in\seteve$,
is a W-coherent lower (upper) probability.
\end{itemize}
\end{proposition}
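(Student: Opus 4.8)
The plan is to derive everything from the observation, already flagged in the surrounding text, that a dF-coherent probability $P$ on $\strset$ is simultaneously a W-coherent lower probability and a W-coherent upper probability on $\strset$. To see this, note that a gain of the form appearing in Definition \ref{def:W-coherence} for a \emph{lower} prevision, with non-negative stakes $s_0,\dots,s_n$, is a gain $\sum_i t_i B_i(X_i-P(X_i|B_i))$ with $t_i=s_i\ge 0$ for $i\ge 1$ and $t_0=-s_0\le 0$, hence a particular instance of the gain allowed in Definition \ref{def:condPrev}, where the coefficients range over all reals; therefore $\sup(\lgain|B)\ge 0$ and $P$ is W-coherent as a lower probability. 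Since a precise prevision is self-conjugate ($-P(-X|B)=P(X|B)$), the same computation, or conjugacy, shows $P$ is also W-coherent as an upper probability.

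With this in hand, part a) is immediate: it is exactly Proposition \ref{pro:ext_prob} specialised to $\mu=P$ a dF-coherent probability, which already asserts that the dF-coherent extensions to $C|D$ are those and only those with $P(C|D)\in[P((C|D)_*),P((C|D)^*)]$. One could alternatively reconstruct a) from the lower and upper W-coherent instances of Proposition \ref{pro:ext_prob}, using Proposition \ref{pro:preserve_inequality} (via $(C|D)_*\lgn C|D\lgn (C|D)^*$) for the \emph{only if} direction and Lemma \ref{lem:extension} for the interior values, but invoking the dF-coherent case of Proposition \ref{pro:ext_prob} directly is shorter.

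For part b) the plan is to apply Proposition \ref{pro:ext_prob_many} twice. Viewing $P$ as a W-coherent lower probability $\lpr$ on $\strset$, that proposition gives that the extension with $\lpr(C|D)=\lpr((C|D)_*)=P((C|D)_*)$ for all $C|D\in\seteve$ is a W-coherent lower probability; viewing $P$ as a W-coherent upper probability $\upr$ on $\strset$, the extension with $\upr(C|D)=\upr((C|D)^*)=P((C|D)^*)$ for all $C|D\in\seteve$ is a W-coherent upper probability. This is precisely the content of b).

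I do not expect a genuine obstacle: all the substantive work resides in Propositions \ref{pro:ext_prob} and \ref{pro:ext_prob_many}, and the only extra input is the elementary remark above about dF-coherence implying both one-sided W-coherences. The single point worth stressing is a negative one — the GN-extensions in b) are in general \emph{not} dF-coherent precise probabilities, only W-coherent lower (respectively upper) probabilities, since collapsing $P(C|D)$ to $P((C|D)_*)$ and to $P((C|D)^*)$ produces a proper imprecision interval whenever $(C|D)_*$ and $(C|D)^*$ differ up to $P$-equivalence; hence b) cannot be upgraded to precise coherence. One should also keep the standing non-triviality convention $C|D\neq\varnothing|D$, $C|D\neq D|D$ in force, so that $(C|D)_*$ and $(C|D)^*$ are the well-defined extremal events supplied by Proposition \ref{pro:set_asterisks}.
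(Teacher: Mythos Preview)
Your proposal is correct and matches the paper's own approach: the paper simply notes that since a dF-coherent $P$ is both a W-coherent lower and upper probability, the result follows ``easily'' from Propositions \ref{pro:ext_prob} and \ref{pro:ext_prob_many}. You have spelled out exactly those details, including the justification that dF-coherence implies both one-sided W-coherences, so there is nothing to add.
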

Part a), stated in a \emph{finite} setting in \cite{Col96}, is a
conditional framework version of de Finetti's \emph{Fundamental Theorem of
Probability} (described in \cite{deF74}, but stated already in the
thirties, see \cite{deF37}). Also our previous results for
imprecise probabilities may be viewed as generalisations in an
imprecise setting of the course of reasoning of the Fundamental
Theorem.

Part b) shows that we can extend $P$ using either the lower or the upper GN-extension.
However, the result is generally not a dF-coherent extension,
but a W-coherent imprecise probability.
This is an interesting example of how imprecise assessments may arise from precise ones.

\section{The GN Relation with Imprecise Previsions}
\label{sec:GN_relation_imprecise}
In order to justify a generalisation of the GN relation to conditional gambles,
first introduced in \cite{Pel13},
let us retrace our steps and reconsider the interpretation of the GN relation with conditional events.

\subsection{More on the GN Relation with Conditional Events}
\label{subsec:GN relation conditional events}
Suppose throughout this section that $A|B\lgn C|D$.
As recalled in Section \ref{subsec:GN_relation},
a betting argument has already been
discussed in the literature to justify Definition \ref{def:goongu}.
Precisely, 
\begin{itemize}
\item[a)] whenever $B\wedge D$ is true,
we bet both on $A|B$ and on $C|D$.
If we win the bet on $A|B$ we win also the bet on $C|D$, and,
conversely, losing the bet on $C|D$ implies losing the bet on $A|B$.
\end{itemize}
However, a) is not the only betting implication of $A|B\lgn C|D$.
To see this, note that the partition $\prt_g$ generated
by $A$, $B$, $C$, $D$ allows for at most $7$
non-impossible events that imply $B\vee D$.\footnote{
\label{foo:triviality_events}
We can neglect those events implying $\nega{B}\wedge\nega{D}$
because equation \eqref{eq:goongu} is then trivially satisfied.}
This is easily seen from \eqref{eq:goongu}, using $A\Rightarrow B
\leftrightarrow A\wedge\nega{B}=\varnothing$.
The seven events are:
\begin{equation*}
\label{eq:non-impossible_partition}
\begin{aligned}
&\omega_1 = A\wedge B\wedge C\wedge D,\ \omega_2 =\nega{A}\wedge B\wedge C\wedge D,\ \omega_3=\nega{A}\wedge B\wedge\nega{C}\wedge D;\\
&\omega_4 = A\wedge\nega{B}\wedge C\wedge D,\ \omega_5 =\nega{A}\wedge\nega{B}\wedge C\wedge D;\\
&\omega_6 = \nega{A}\wedge B\wedge C\wedge\nega{D},\ \omega_7 =\nega{A}\wedge B\wedge\nega{C}\wedge\nega{D}.
\end{aligned}
\end{equation*}
%
%

In particular,
one of the impossible elements is $A\wedge B\wedge \nega{C}\wedge D$.
This feature of $\prt_{g}$ ensures the betting implication in a).
In fact, such an implication corresponds to saying that it can never be the case
when $B\wedge D$ holds, that $A$ is true and $C$ false,
and indeed $A\wedge\nega{C}|B\wedge D = A\wedge B\wedge\nega{C}\wedge D|B\wedge D = \varnothing |B \wedge D$.
Note that $B\wedge D=\omega_{1}\vee\omega_{2}\vee\omega_{3}$ in $\prt_g$.
What else can we deduce from realising that $\nega{B}\wedge D=\omega_{4}\vee\omega_{5}$?
Because $C$ is true at both $\omega_4$ and $\omega_5$,
we get $C|\nega{B}\wedge D=C|\omega_{4}\vee\omega_{5}=\Omega|\omega_{4}\vee\omega_{5}$.
Hence a second betting effect:
\begin{itemize}
\item[b)] whenever $\nega{B}\wedge D$ is true,
that is whenever we can bet on $C|D$ while being not allowed to bet on $A|B$,
our winning the bet on $C|D$ is sure.
In fact, this is the same as betting on $C|\nega{B}\wedge D=\Omega|\nega{B}\wedge D$.
\end{itemize}
Similarly, $B\wedge\nega{D}=\omega_6\vee\omega_7$
implies $A|B\wedge\nega{D}=\varnothing|B\wedge\nega{D}$.
Hence, the third betting consequence:
\begin{itemize}
\item[c)] whenever $B\wedge\nega{D}$ is true,
i.e. whenever we can bet on $A|B$ but not on $C|\nega{D}$, the bet
on $A|B$ is surely lost.
In fact, we are actually betting on
$A|B\wedge\nega{D}=\varnothing|B\wedge\nega{D}$.
\end{itemize}
Undoubtedly, the additional betting implications from b) and c) are
very strong.
They help in understanding why the GN relation may be a
very partial order, with several couples of GN non-comparable
conditional events (cf. Remark \ref{rem:incomparable}).
They also highlight other aspects of the GN
relation, which we cannot neglect when extending it to conditional
gambles.

\subsection{The GN relation for conditional gambles}
\label{subsec:GN_conditional_gambles}
With conditional gambles defined on a partition $\prt$,
we introduce the GN relation as follows:
\begin{definition}
\label{def:GN_random_numbers}
$X|B\lgn Y|D$ iff, $\forall\omega\in\prt$,
\begin{equation}
\label{eq:GN_random_numbers}
I_{B}X(\omega)+I_{\nega{B}\wedge D}(\omega)\sup_B X\leq I_{D} Y(\omega)+I_{B\wedge\nega{D}}(\omega)\inf_D Y.
\end{equation}
\end{definition}
To justify this definition, let us verify that its betting implications
are analogous to a), b), c) of the preceding Section \ref{subsec:GN
relation conditional events}.\footnote{
Once again (cf. Footnote \ref{foo:triviality_events}),
we drop the case $\omega\Rightarrow\nega{B}\wedge\nega{D}$,
since then \eqref{eq:GN_random_numbers} holds trivially in the form $0\leq 0$.
}
\begin{itemize}
\item[a')] If $\omega\Rightarrow B\wedge D$, \eqref{eq:GN_random_numbers} reduces to $X(\omega)\leq Y(\omega)$.
This means: whenever we bet both on $X|B$ and on $Y|D$, we gain at
least as much with the bet on $Y|D$;
\item[b')] for $\omega\Rightarrow\nega{B}\wedge D$,
it scales down to $\sup_B X=\sup\{X|B\}\leq Y(\omega)$,
hence $\sup\{X|B\}\leq\inf\{Y|\nega{B}\wedge D\}$. By the last inequality
the gain from betting on $Y|D$, i.e. on $Y|\nega{B}\wedge D$ in this
case, is not less than our (potential) gain on $X|B$, had we bet on
it;
\item[c')] if $\omega\Rightarrow B\wedge\nega{D}$, \eqref{eq:GN_random_numbers} reduces to $X(\omega)\leq\inf_D Y=\inf\{Y|D\}$,
hence $\sup\{X|B\wedge\nega{D}\}\leq\inf\{Y|D\}$. The gain from
betting on $X|B$ while we cannot bet on $Y|D$, i.e. on the gain regarding
$X|B\wedge\nega{D}$ in this case, is dominated by the (potential)
gain on $Y|D$, had we bet on it.
\end{itemize}
In a betting perspective, Definition \ref{def:GN_random_numbers} is a generalisation of Definition \ref{def:condPrev}.
In a sense, c') is even less drastic than c): c') imposes only a dominance condition against the gain regarding $X|B$,
while c) asks for a sure loss when betting on $A|B$.
The differentiation between b') and b) is similar.
Clearly both distinctions depend on the dichotomic nature of events.

When $X$, $Y$ are indicators of events,
say $X=I_{A}$, $Y=I_{C}$,
\eqref{eq:GN_random_numbers} becomes
\begin{equation}
\label{eq:GN_random_numbers_events}
I_{A\wedge B}+I_{\nega{B}\wedge D}\max\{I_{A}|B\}\leq I_{C\wedge D}+I_{B\wedge\nega{D}}\min\{I_{C}|D\}
\end{equation}
and it can be shown that \eqref{eq:GN_random_numbers_events} describes the GN relation
from Definition \ref{def:condPrev} in a less immediate
but equivalent form:
\begin{proposition}
\label{pro:equivalence_GN_rn_events}
When $A|B\neq\varnothing|B$ and
$C|D\neq D|D$, $A|B\lgn C|D$ iff \eqref{eq:GN_random_numbers_events}
holds.
\end{proposition}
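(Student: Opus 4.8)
The plan is to reduce \eqref{eq:GN_random_numbers_events} to a purely logical (set‑theoretic) implication and then match it against Definition \ref{def:goongu}. First I would exploit the two non‑triviality hypotheses. Since $A|B\neq\varnothing|B$ we have $A\wedge B\neq\varnothing$, hence $\max\{I_{A}|B\}=1$; since $C|D\neq D|D$ we have $\nega{C}\wedge D\neq\varnothing$, hence $\min\{I_{C}|D\}=0$. Substituting these two values, \eqref{eq:GN_random_numbers_events} collapses to
\begin{equation*}
I_{A\wedge B}+I_{\nega{B}\wedge D}\leq I_{C\wedge D}.
\end{equation*}
Because $A\wedge B$ and $\nega{B}\wedge D$ are disjoint events, the left‑hand side is the indicator of $(A\wedge B)\vee(\nega{B}\wedge D)$, so the displayed inequality is equivalent to the implication $(A\wedge B)\vee(\nega{B}\wedge D)\Rightarrow C\wedge D$, i.e. to the conjunction
\begin{equation*}
A\wedge B\Rightarrow C\wedge D \qquad\text{and}\qquad \nega{B}\wedge D\Rightarrow C\wedge D .
\end{equation*}

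It then remains to prove that this conjunction is equivalent to \eqref{eq:goongu}, namely to $A\wedge B\Rightarrow C\wedge D$ together with $\nega{C}\wedge D\Rightarrow\nega{A}\wedge B$. Since the conjunct $A\wedge B\Rightarrow C\wedge D$ is common to both, the whole proposition reduces to the following claim: \emph{assuming $A\wedge B\Rightarrow C\wedge D$, the conditions $\nega{B}\wedge D\Rightarrow C\wedge D$ and $\nega{C}\wedge D\Rightarrow\nega{A}\wedge B$ are equivalent.} I would prove the two implications by a short atomwise argument. For one direction, take an atom $\omega\Rightarrow\nega{C}\wedge D$; then $\omega\not\Rightarrow C\wedge D$, so $\omega\not\Rightarrow\nega{B}\wedge D$ by hypothesis, and since $\omega\Rightarrow D$ this gives $\omega\Rightarrow B$; moreover $\omega\not\Rightarrow A\wedge B$ (otherwise $A\wedge B\Rightarrow C\wedge D$ would force $\omega\Rightarrow C\wedge D$), whence $\omega\Rightarrow\nega{A}$, so $\omega\Rightarrow\nega{A}\wedge B$. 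For the converse, take an atom $\omega\Rightarrow\nega{B}\wedge D$; since $\omega\Rightarrow D$, either $\omega\Rightarrow C\wedge D$ (done) or $\omega\Rightarrow\nega{C}\wedge D$, and in the latter case $\nega{C}\wedge D\Rightarrow\nega{A}\wedge B$ forces $\omega\Rightarrow B$, contradicting $\omega\Rightarrow\nega{B}$.

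Chaining the two reductions yields $A|B\lgn C|D$ in the sense of Definition \ref{def:goongu} if and only if \eqref{eq:GN_random_numbers_events} holds, which is the thesis. I do not expect a serious obstacle: the only point demanding a little care is the logical claim above, where one must keep track of which disjuncts are ruled out by $A\wedge B\Rightarrow C\wedge D$ — in particular noticing that $\nega{B}\wedge D\Rightarrow C\wedge D$ already absorbs the part of $\nega{C}\wedge D\Rightarrow\nega{A}\wedge B$ concerning atoms in $A\wedge\nega{B}\wedge D$, so the two conditions genuinely coincide. It is also worth recording (though not needed for the proof) that both hypotheses are essential: dropping $A|B\neq\varnothing|B$ makes \eqref{eq:GN_random_numbers_events} trivially true even when $\varnothing|B\lgn C|D$ fails, and symmetrically for $C|D\neq D|D$.
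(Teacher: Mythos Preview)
Your proof is correct and essentially matches the paper's: both reduce \eqref{eq:GN_random_numbers_events} under the non-triviality hypotheses to $I_{A\wedge B}+I_{\nega{B}\wedge D}\leq I_{C\wedge D}$ and then establish the equivalence with \eqref{eq:goongu} by elementary logical/indicator manipulations. The only cosmetic difference is that the paper proves the forward implication ($\lgn\Rightarrow$\eqref{eq:GN_random_numbers_events}) by direct indicator bounds without invoking the hypotheses, whereas you apply the simplification uniformly to both directions---the content is the same.
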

\begin{proof}
Let us suppose $A|B\lgn C|D$.
Hence $\nega{C}\wedge D\Rightarrow\nega{A}\wedge B$,
which implies $\nega{B}\wedge\nega{C}\wedge D=\varnothing$.
We get therefore $I_{\nega{B}\wedge D}=I_{\nega{B}\wedge C\wedge D}+I_{\nega{B}\wedge\nega{C}\wedge D}=I_{\nega{B}\wedge C\wedge D}$.
Further, from $A\wedge B\Rightarrow C\wedge D$ and
since $A\wedge B\Rightarrow B$,
we get also $A\wedge B\Rightarrow B\wedge C\wedge D$, i.e. $I_{A\wedge B}\leq I_{B\wedge C\wedge D}$.
It follows
$I_{A\wedge B}+I_{\nega{B}\wedge D}\max\{I_{A}|B\}\leq I_{B\wedge C\wedge D}+I_{\nega{B}\wedge C\wedge D}\max\{I_{A}|B\}\leq I_{B\wedge C\wedge D}+I_{\nega{B}\wedge C\wedge D}=I_{C\wedge D}\leq I_{C\wedge D}+I_{B\wedge\nega{D}}\min\{I_{C}|D\}$.

Conversely, suppose \eqref{eq:GN_random_numbers_events} holds.
Since  $A|B\neq\varnothing|B$ and $C|D\neq D|D$, $\max\{I_{A}|B\}=1$ and $\min\{I_{C}|D\}=0$,
i.e. \eqref{eq:GN_random_numbers_events} becomes $I_{A\wedge B}+I_{\nega{B}\wedge D}\leq I_{C\wedge D}$.
As an immediate consequence, we have that $I_{A\wedge B}\leq I_{C\wedge D}$ or, equivalently, $A\wedge B\Rightarrow C\wedge D$.
To obtain the second implication in \eqref{eq:goongu},
multiply both terms in $I_{A\wedge B}+I_{\nega{B}\wedge D}\leq I_{C\wedge D}$ in turn by $I_{B\wedge\nega{C}\wedge D}$ first and by $I_{\nega{B}\wedge D}$ then.
We get, respectively, $I_{A\wedge B\wedge\nega{C}\wedge D}\leq 0$ and $I_{\nega{B}\wedge D}\leq I_{\nega{B}\wedge C\wedge D}$.
The first inequality implies $A\wedge B\wedge \nega{C}\wedge D=\varnothing$, i.e. $B\wedge \nega{C}\wedge D\Rightarrow\nega{A}$,
the second $\nega{B}\wedge\nega{C}\wedge D=\varnothing$.
It follows $\nega{C}\wedge D=(\nega{B}\wedge\nega{C}\wedge D)\vee(B\wedge\nega{C}\wedge D)=B\wedge\nega{C}\wedge D\Rightarrow\nega{A}\wedge B$.
\end{proof}
\textbf{Convention}
In the sequel,  we may and will always assume the
\emph{non-triviality} conditions $X|B\neq\varnothing|B$ and $Y|D\neq
D|D$, without restrictions: in the cases we are ruling out, any
uncertainty evaluation is trivial by equation~\eqref{eq:consistency_conditions}.

\subsection{Ordering induced by the GN Relation}
\label{subsec:Ordering from GN relation}

As a partial ordering \emph{among conditional gambles},
the generalised GN relation induces an agreeing ordering among their
uncertainty measures whenever they are C-convex or W-coherent
imprecise previsions, or dF-coherent previsions. Since W- and
dF-coherence are special cases of C-convexity, it is enough to
establish the result for C-convex previsions.
\begin{proposition}
\label{pro:GN_measures}
Let $\mu$ be either a lower ($\lpr$) or an upper ($\upr$)
C-convex prevision defined on $\setran\supseteq\{X|B, Y|D \}$.
Then,
\begin{equation}
\label{eq:GN_measures}
X|B\lgn Y|D \rightarrow \mu(X|B)\leq\mu(Y|D).
\end{equation}
\end{proposition}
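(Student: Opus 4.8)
The plan is to reduce the upper-prevision case to the lower-prevision one by conjugacy, and then to settle the lower-prevision case by a single application of Definition \ref{def:C-convexity}. For the reduction I would first note that $X|B\lgn Y|D$ is equivalent to $(-Y)|D\lgn(-X)|B$: multiplying the defining inequality \eqref{eq:GN_random_numbers} by $-1$ and using $-\sup_B X=\inf\{(-X)|B\}$ and $-\inf_D Y=\sup\{(-Y)|D\}$ turns it into the defining inequality for $(-Y)|D\lgn(-X)|B$ (here $\nega{B}\wedge D$ and $B\wedge\nega{D}$ simply swap roles). Since the conjugate lower prevision $\lpr(Z|H)=-\upr(-Z|H)$ of a C-convex upper prevision is C-convex and, in the upper case, $\setran$ is closed under negation, the claim $\upr(X|B)\leq\upr(Y|D)$ rewrites as $\lpr((-Y)|D)\leq\lpr((-X)|B)$, which is exactly \eqref{eq:GN_measures} for the lower prevision $\lpr$ applied to $(-Y)|D\lgn(-X)|B$. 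So from now on I would assume $\mu=\lpr$ is a C-convex lower prevision.

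Next I would recall that C-convex previsions are internal, $\inf\{Z|H\}\leq\lpr(Z|H)\leq\sup\{Z|H\}$ for all $Z|H\in\setran$; this follows from Definition \ref{def:C-convexity} with $n=1$, $s_0=s_1=1$, and the centering pair $0|H\in\setran$, $\lpr(0|H)=0$, taking $(X_1|B_1,X_0|B_0)$ to be $(Z|H,0|H)$ for the upper bound and $(0|H,Z|H)$ for the lower bound. Only two consequences will actually be used: $\lpr(X|B)\leq\sup_B X$ and $\inf_D Y\leq\lpr(Y|D)$.

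For the core argument, set $\alpha=\lpr(X|B)$, $\beta=\lpr(Y|D)$ and apply Definition \ref{def:C-convexity} with $n=1$, $X_1|B_1=X|B$, $X_0|B_0=Y|D$, $s_0=s_1=1$, so that $\lgain=B(X-\alpha)-D(Y-\beta)$ and the conditioning event is $B\vee D$. I would bound $\lgain$ on the atoms (of any partition refining $\prt$) that imply $B\vee D$, splitting them according to whether they imply $B\wedge D$, $\nega{B}\wedge D$, or $B\wedge\nega{D}$ (atoms implying $\nega{B}\wedge\nega{D}$ are irrelevant). Using the case analysis a'), b'), c') of \eqref{eq:GN_random_numbers}: on $B\wedge D$ one has $X\leq Y$, hence $\lgain=(X-Y)+(\beta-\alpha)\leq\beta-\alpha$; on $\nega{B}\wedge D$ one has $Y\geq\sup_B X$, hence $\lgain=\beta-Y\leq\beta-\sup_B X$; on $B\wedge\nega{D}$ one has $X\leq\inf_D Y$, hence $\lgain=X-\alpha\leq\inf_D Y-\alpha$. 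Therefore $\sup(\lgain|B\vee D)\leq\max\{\beta-\alpha,\ \beta-\sup_B X,\ \inf_D Y-\alpha\}$ (dropping any term whose event is impossible). By C-convexity this maximum is $\geq 0$, so at least one of the three quantities is non-negative: in the first case $\alpha\leq\beta$ directly, in the second $\alpha\leq\sup_B X\leq\beta$, in the third $\alpha\leq\inf_D Y\leq\beta$. In every case $\lpr(X|B)\leq\lpr(Y|D)$, as desired.

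The step I would be most careful about is the bookkeeping for degenerate conditioning events: if $\nega{B}\wedge D=\varnothing$ or $B\wedge\nega{D}=\varnothing$ the corresponding branch drops out of the estimate on $\sup(\lgain|B\vee D)$, and if both vanish (i.e. $B=D$) the single bound $\sup(\lgain|B)\leq\beta-\alpha$ already finishes the proof. Beyond that, the argument is routine once the internality bounds and the case-inequalities a')--c') are in place.
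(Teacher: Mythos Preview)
Your proof is correct and follows essentially the same route as the paper's: the same instantiation of Definition~\ref{def:C-convexity} with $n=1$, $s_0=s_1=1$, $X_1|B_1=X|B$, $X_0|B_0=Y|D$, the same three-case bound on $\lgain|B\vee D$ via the GN inequality, and the same appeal to internality to close each case. The only differences are cosmetic: the paper packages the bound through an auxiliary gamble $Z$ rather than your direct atom-by-atom estimate, and for the upper-prevision case the paper simply says ``an analogue proof applies'' whereas you reduce it explicitly via conjugacy using the (correct) observation that $X|B\lgn Y|D$ is equivalent to $(-Y)|D\lgn(-X)|B$. One small caveat on that reduction: it presupposes that $\setran$ is closed under negation so that the conjugate lower prevision is defined on the relevant gambles; this is implicit in the paper's framework for upper previsions, but if one works with a direct definition of C-convex upper previsions on an arbitrary $\setran$, the literal ``analogue proof'' (flipping the roles of $X_0$ and $X_1$ and the sign conventions in the gain) avoids that assumption.
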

\begin{proof}
Let $\lpr$ be a C-convex lower prevision on
$\setran$, and consider the special case $n=1$, $s_0=s_1=1$,
$X_0|B_0=Y|D$, $X_1|B_1=X|B$ in Definition \ref{def:C-convexity}.
Correspondingly, we obtain $\lgain=B(X-\lpr(X|B))-D(Y-\lpr(Y|D))=I_{B}X-I_{D}Y+I_{D}\lpr(Y|D)-I_{B}\lpr(X|B)$,
using the notation for the indicators of events in the right-hand term of the second equality.
Assume $X|B\lgn Y|D$,
which ensures by \eqref{eq:GN_random_numbers}
\begin{equation*}
I_{B}X-I_{D}Y\leq I_{B\wedge\nega{D}}\inf_{D}Y-I_{\nega{B}\wedge D} \sup_{B}X.
\end{equation*}

Using the previous inequality,
$\lgain|B\vee D\leq I_{B\wedge\nega{D}}\inf_{D}Y-I_{\nega{B}\wedge D} \sup_{B}X +I_{D}\lpr(Y|D)-I_{B}\lpr(X|B)|B\vee
D\eqdef Z|B\vee D$. Since $\sup\lgain|B\vee D\geq 0$ is necessary
for C-convexity of $\lpr$, it is also necessary that $\max Z|B\vee D
\geq 0$.
But
\[Z|B\vee D=
\begin{cases}
\lpr(Y|D)-\lpr(X|B) &\text{at } B\wedge D \\
-\sup_{B}X+\lpr(Y|D) &\text{at } \nega{B}\wedge D \\
\inf_{D}Y-\lpr(X|B) &\text{at } B\wedge\nega{D}
\end{cases}
\]
Hence, and recalling also that the condition $\lpr(X|B)\in [\inf_{B}X, \sup_{B}X]$ is necessary for C-convexity,
at least one of the following three sets of inequalities must hold:

$\lpr(Y|D)\geq\lpr(X|B)$, or $\lpr(Y|D)\geq\sup_{B}X \geq
\lpr(X|B)$, or $\lpr(Y|D)\geq\inf_{D}Y \geq \lpr(X|B)$.

In all cases then $\lpr(Y|D)\geq\lpr(X|B)$ is necessary for
C-convexity. An analogue proof applies to the case of upper C-convex
previsions.
\end{proof}

\textit{Discussion.}
Proposition \ref{pro:GN_measures} is the most general result in this paper
as for the ordering induced by the GN relation on an uncertainty measure.
Its proof does not extend to (non-centered) convex previsions,
because such previsions may not satisfy the internality condition $\lpr(X|B)\in [\inf_{B}X,\sup_{B}X]$.

Yet, Proposition \ref{pro:GN_measures} still holds under the weaker requirement that
$\lpr$ ($\upr)$ is a \emph{(centered) $1$-convex} lower (upper) prevision.
In the case of lower previsions,
$1$-convex means that $\lpr$ satisfies a modified version of Definition~\ref{def:C-convexity},
where `for all $n\in\nats$' is replaced by `for $n=1$'.
Proposition \ref{pro:GN_measures} holds for $1$-convex lower previsions, because $n=1$ in its proof.
This kind of previsions has not been investigated yet,
to the best of our knowledge.
It extends to conditional gambles the (unconditional) $1$-convex lower previsions introduced and studied in \cite{Bar09},
Section~4.
On their turn, these encompass the notion of \emph{capacity} (normalised, $1$-monotone measure) in the case of events,
and that of \emph{niveloid} \cite{Dol95} for gambles.
Hence, $1$-convex conditional previsions might correspond to some concept of conditional capacity or niveloid.
By Proposition  \ref{pro:GN_measures}, they seem to be the weakest kind of uncertainty measure agreeing with the GN-relation.

Still about (centered) $1$-convex previsions,
note that Proposition~\ref{pro:ext_prob} holds in the case they concern conditional events,
i.e. when they are $1$-convex lower probabilities.
Minor modifications are required in its proof as well as observing that Lemma \ref{lem:extension} can still be used.
Hence, the GN relation ensures a kind of extension theorem for these imprecise probabilities.

Finally,
note that the GN relation induces,
in general,
no agreeing order on measures for conditional gambles that extend the consistency concept of avoiding sure loss (ASL) \cite{Wal91}.
It is easy to realise this considering the unconditional case.
Already in this special instance,
equation \eqref{eq:EimpliesF} does not necessarily hold
if $\mu$ is a lower/upper prevision that avoids sure loss.
In our opinion, this is a counterintuitive,
if not even weak aspect of the ASL concept.

\subsection{Inequalities with the GN Relation}
\label{sec:inequalities_GN}
Stating to what extent the GN relation in Definition \ref{def:GN_random_numbers}
is relevant in extension problems generalising those discussed in Section~\ref{subsec:natural_extension}
is an open question at present.
However, the GN relation may be employed in specific instances for getting
inequalities on uncertainty evaluations.
We introduce the topic in this section.

Unlike conditional gambles, (conditional) events are always non-negative.
It should be expected then that the inequalities regarding events need some sign restriction to be extended to gambles,
or may even be reversed for non-positive gambles.

We see this when trying to generalise equation \eqref{eq:mu_oinequality}
by replacing $A$ with a gamble $X$,
while still $B_1\Rightarrow B_0$.
From the computations displayed in \cite[Example 3]{Pel13} we know that:
\begin{itemize}
\item $X|B_1$ and $B_1 X|B_0$ are GN-comparable iff $\inf(X|B_1)\cdot\sup(X|B_1)\geq 0$,
i.e. iff $X|B_1$ cannot take up values of opposite signs.
\item More specifically,
\begin{equation}
\label{eq:rel_inf}
B_1 X|B_0\lgn X|B_1 \text{ iff } \inf(X|B_1)\geq 0
\end{equation}
\begin{equation}
\label{eq:rel_sup}
X|B_1\lgn B_1 X|B_0  \text{ iff } \sup(X|B_1)\leq 0
\end{equation}
\end{itemize}
Clearly, \eqref{eq:rel_inf} generalises the relation $A\wedge B_{1}|B_{0}\lgn A|B_{1}$, which is a premise to \eqref{eq:mu_oinequality}.
The opposite relation in \eqref{eq:rel_sup} holds for non-positive $X|B_1$.

We may obtain two inequalities for C-convex previsions
from  \eqref{eq:rel_inf} and \eqref{eq:rel_sup},
by means of Proposition \ref{pro:GN_measures}.
For instance,
when $\inf(X|B_1)\geq 0$ we come to
\begin{equation}
\label{eq:cons_inf_sup_1}
\lpr(B_1 X|B_0)\leq\lpr(X|B_1),
\end{equation}
which extends \eqref{eq:mu_oinequality}.
If further $\lpr$ is W-coherent and $\lpr(X|B_1)\cdot\lpr(B_1|B_0)>0$,
equation \eqref{eq:cons_inf_sup_1} can be matched with the upper bound \eqref{eq:product_rule_1},
getting
\begin{equation}
\label{eq:cons_inf_sup_2}
\lpr(B_1 X|B_0)\leq\lpr(X|B_1)\leq\lpr(B_1 X|B_0)/\lpr(B_1|B_0).
\end{equation}
We give a further example of inequality
derived using the GN relation and some elementary properties of W-coherence.
Let us make for this the following assumptions:
\begin{itemize}
\item[a)] A gamble $X$  and a partition $\prt$ are given.
Further, a W-coherent lower prevision $\lpr$ is assigned on $\strset(\prt)\cup\{X|D:D\in\genfield, D \neq\varnothing\}$.
\item[b)] $B$ is a non-impossible event
such that $B\notin\strset(\prt)$.
\end{itemize}
We would like to bound the uncertainty evaluation of $X|B$.
For this, define the instrumental gamble $Y=X-\inf_{B}X$.
Thus, $Y|B\geq 0$, $Y|B_{*}\geq 0$ and, by W-coherence, $\lpr(Y|B_{*})\geq 0$.

Suppose now $\lpr(X|B_{*})>\inf_{B}X$, i.e. $\lpr(Y|B_{*})>0$.\footnote{
We use here the W-coherence property $\lpr(X+h|B)=\lpr(X|B)+h,\ \forall h\in\reals$.}
Since $B_{*}Y|B\leq Y|B$ and using \eqref{eq:product_rule_1},
we may write
\begin{equation}
\label{eq:chain_ineq_1}
\lpr(Y|B)\geq\lpr(B_* Y|B)\geq\lpr(B_*|B)\cdot\lpr(Y|B_*).
\end{equation}
In the rightmost term of \eqref{eq:chain_ineq_1} we know
$\lpr(Y|B_*)=\lpr(X|B_*)-\inf_{B}X$ (by a)),
but not $\lpr(B_*|B)$.
However,
$\lpr(B_*|B)\geq\lpr((B_*|B)_*)$  by Proposition \ref{pro:preserve_inequality}.
Recalling eq. \eqref{eq:max_cond_event},
and since $\bigvee\limits_{e\in\prt:\ e\Rightarrow B_*\wedge B} e=\bigvee\limits_{e\in\prt:\ e\Rightarrow B_{*}} e=B_*$,
we have $(B_*|B)_*=B_*|B^*$.
Using these facts in \eqref{eq:chain_ineq_1},
we easily obtain the inequality
\begin{equation}
\label{eq:chain_ineq_2}
\lpr(X|B)\geq\lpr(B_*|B^*)\cdot\lpr(X|B_*)+\upr(\nega{B_{*}}|B^{*})\cdot\inf_{B} X
\end{equation}
where the uncertainty evaluations in the right-side term are known.

When $\lpr(X|B_{*})=\inf_{B}X$, i.e. $\lpr(Y|B_{*})=0$,
\eqref{eq:chain_ineq_2} holds trivially.
In fact, it reduces to $\lpr(X|B)\geq\inf_{B}X$,
a condition implied by W-coherence.

Similar inequalities may be obtained for upper previsions and/or making specific assumptions.

For instance, if it is further supposed that $X|B$ takes \emph{finitely} many distinct values $x_1,x_2,\ldots,x_n\geq 0$,
the following bound holds (see \cite{Pel13}), where $\omega_i=(X=x_i)$:
\begin{equation}
\label{eq:chain_ineq_3}
\lpr(X|B)\geq\sum_{i=1}^{n}x_i\lpr((\omega_i|B)_*).
\end{equation}

\section{Conclusions}
\label{sec:conclusions}
The analysis of the GN relation within Imprecise Probability Theory carried out in this paper
shows that it preserves the basic monotonicity property (equation \eqref{eq:EimpliesF})
of implication towards several kinds of imprecise conditional previsions,
including the (weak) consistency concept of $1$-convexity.
This latter notion has still to be focalised for most of its aspects.
The role of the GN relation in extension problems is fixed by the results in Section \ref{subsec:natural_extension},
as for conditional events.
It largely remains a topic for future work in the case of conditional gambles.

\section*{Acknowledgements}
We wish to thank the referees for their constructive suggestions.

*NOTICE: This is the authors' version of a work that was accepted for publication in the International Journal of Approximate Reasoning. Changes resulting from the publishing process, such as peer review, editing, corrections, structural formatting, and other quality control mechanisms may not be reflected in this document. Changes may have been made to this work since it was submitted for publication. A definitive version was subsequently published in the International Journal of Approximate Reasoning, vol. 55, issue 8, November 2014, pages 1694-–1707, doi:10.1016/j.ijar.2014.06.002

$\copyright$ Copyright Elsevier

http://www.sciencedirect.com/science/article/pii/S0888613X14001017


\bibliographystyle{amsplain}
\bibliography{refs}{}

\end{document}